\def\@abssec#1{\vspace{.05in}\footnotesize \parindent .2in 
{\bf #1. }\ignorespaces} 
\newtheorem{theorem}{Theorem}[section]
\newtheorem{lemma}[theorem]{Lemma}
\newtheorem{proposition}[theorem]{Proposition}
\newtheorem{definition}[theorem]{Definition}
\newtheorem{remark}[theorem]{Remark}
\def\ds{\displaystyle}
\def \Rm {\mathbb R}
\def \Nm {\mathbb N}
\def \NN {\mathbb N}
\def \HH {\mathbb H}
\def \Cm {\mathbb C}
\def \AA {\mathbb A}
\def \BB {\mathbb B}
\def \WW {\mathbb W}
\newcommand{\eps}{\varepsilon}
\newcommand{\be}{\begin{equation}}
\newcommand{\ee}{\end{equation}}
\newcommand{\bea}{\begin{eqnarray}}
\newcommand{\eea}{\end{eqnarray}}
\newcommand{\bee}{\begin{eqnarray*}}
\newcommand{\eee}{\end{eqnarray*}}
\def\fref#1{{\rm (\ref{#1})}}
\newcommand{\calL}{\mathcal L}
\newcommand{\calF}{\mathcal F}
\newcommand{\calM}{\mathcal M}
\newcommand{\frakF}{\mathfrak F}
\newcommand{\calE}{\mathcal E}
\newcommand{\calN}{\mathcal N}
\newcommand{\calS}{\mathcal S}
\newcommand{\calJ}{\mathcal J}
\newcommand{\calA}{\mathcal A}
\newcommand{\cout}[1]{}
\newcommand{\frakh}{\mathfrak{h}}
\DeclareMathOperator{\Tr}{Tr}
\DeclarePairedDelimiter\bra{\langle}{\rvert}
\DeclarePairedDelimiter\ket{\lvert}{\rangle}
\begin{document}
 \title{Entropy minimization for many-body quantum systems}
\author{Romain  Duboscq \footnote{Romain.Duboscq@math.univ-tlse.fr}}
 \affil{Institut de Math\'ematiques de Toulouse ; UMR5219\\Universit\'e de Toulouse ; CNRS\\INSA, F-31077 Toulouse, France}
 \author{Olivier Pinaud \footnote{pinaud@math.colostate.edu}}
 \affil{Department of Mathematics, Colorado State University\\ Fort Collins CO, 80523}
 \maketitle
 \begin{abstract}The problem considered here is motivated by a work by B. Nachtergaele and H.T. Yau where the Euler equations of fluid dynamics are derived from many-body quantum mechanics, see \cite{nachter}. A crucial concept in their work is that of local quantum Gibbs states, which are quantum statistical equilibria  with prescribed particle, current, and energy densities at each point of space (here $\Rm^d$, $d \geq 1$). They assume that such local Gibbs states exist, and show that if the quantum system is initially in a local Gibbs state, then the system stays, in an appropriate asymptotic limit, in a Gibbs state with particle, current, and energy densities now solutions to the Euler equations. Our main contribution in this work is to prove that such local quantum Gibbs states can be constructed from prescribed densities under mild hypotheses, in both the fermionic and bosonic cases. The problem consists in minimizing the von Neumann entropy in the quantum grand canonical picture under constraints of local particle, current, and energy densities. The main mathematical difficulty is the lack of compactness of the minimizing sequences to pass to the limit in the constraints. The issue is solved by defining auxiliary constrained optimization problems, and by using some monotonicity properties of equilibrium entropies.
 \end{abstract}
\section{Introduction}
This work is concerned with the minimization of the von Neumann entropy
$$
S(\varrho)= \Tr (\varrho \log \varrho),
$$
where $\varrho$ is a nonnegative, trace class operator with trace one, on some infinite-dimensional Hilbert space (we will refer to such operator in the sequel as a \textit{state}, which are self-adjoint since nonnegative). Note the sign change in the entropy compared to the standard definition in the physics literature. The problem is motivated by the work of Nachtergaele and Yau addressed in \cite{nachter}, where they derive the Euler equations of fluid mechanics from quantum dynamics. More precisely, they consider the entropy minimization problem in the context of many-body quantum mechanics where the underlying Hilbert space is the \textit{Fermionic Fock space}. The latter is defined as follows: let $\mathfrak{h}=L^2(\Rm^d)$ for $d \geq 1$; the Fermionic Fock space $\mathfrak{F}_f$ is the direct sum
\begin{equation*}
\mathfrak{F}_f : = \bigoplus_{n = 0}^{+\infty} \mathfrak{F}^{(n)}_f,
\end{equation*}
where $\mathfrak{F}_f^{(n)} : = \mathfrak{h}^{\wedge_s n}$ is the $n$-fold antisymmetric tensor product of $\mathfrak{h}$, with the convention $\mathfrak{h}^{\wedge_s 0} = \mathbb{C}$. We have
$$
\mathfrak{F}_f^{(n)}=L^2_a\big((\Rm^d)^n\big),
$$
where $L^2_a\big((\Rm^d)^n\big)$ is the space of antisymmetric square integrable complex functions on $(\Rm^d)^n$, that is, for $x_\ell \in \Rm^d$ , $\ell=1,\cdots,n$, $1 \leq, i,j \leq n$, $$f(x_1,\cdots,x_i,\cdots,x_j,\cdots,x_n)=-f(x_1,\cdots,x_j,\cdots, x_i,\cdots,x_n)$$
when $f \in L^2_a((\Rm^d)^n)$. This setting is usually referred to as the grand canonical picture since the system is not fixed to a particular subspace with $n$ particles.

Nachtergaele and Yau consider the minimization of $S(\varrho)$ over states $\varrho$ with \textit{prescribed particle density, current and energy densities at any given point $x \in \Rm^d$}. This results in an infinite dimensional constrained optimization problem, whose main mathematical difficulty is to handle the local nature of the constraints. The solution can be seen as the quantum many-body equivalent of the classical Maxwellian obtained by minimizing the Boltzmann entropy under local constraints of density, current and energy.

Using the formalism of the second quantization, the local constraints can be defined as follows: let $\{e_i\}_{i \in \NN}$ be an orthonormal basis of $\frakh$, let $x \in \Rm^d$, and consider (formally) the following family of operators parametrized by $x$,
$$
a_x=\sum_{i \in \NN} e_i^*(x) a(e_i),
$$
where $a(\cdot)$ is the annihilation operator and $e_i^*$ is the complex conjugate of $e_i$. The adjoint of $a_x$ in $\frakF_f$  is denoted by $a^*_x$. We do not give the explicit definition of $a(\cdot)$ since it will not be needed in the sequel, and we point the reader to \cite{arai} for instance for more details.  For a state $\varrho$ and $\Tr(\cdot)$ the trace in $\mathfrak{F}_f$, we introduce the following functions of $x$,

\be \label{consint}
\left\{
\begin{array}{ll}
\ds n[\varrho](x):=\Tr (a^*_x a_x\, \varrho), \qquad &\textrm{local density}\\
\ds u[\varrho](x): = \Im \Tr (a^*_x \nabla a_x\, \varrho),\qquad &\textrm{local current}\\ 
\ds k[\varrho](x):=\Tr (\nabla a^*_x \cdot \nabla a_x\, \varrho)\qquad &\textrm{local kinetic energy},
\end{array}
\right.
\ee
where the gradient $\nabla$ is taken with respect to the variable $x$, and $\Im$ denotes imaginary part. We defined above the local kinetic energy instead of the total energy which includes (two-body) interactions between the particles. The latter will be defined further on. The formulas in \fref{consint} are similar in structure to the usual definitions of the density, current and kinetic energy for the one-particle setting. We will introduce in the sequel equivalent definitions based on one-particle density matrices that are more amenable to mathematical rigor.

Introducing the potential energy $e_P[\varrho]= V n[\varrho]$ for some potential $V$, and the total energy $e[\varrho]=k[\varrho]+e_P[\varrho]+e_I[\varrho]$ for some two-body interaction term $e_I[\varrho]$, Nachtergaele and Yau assume that the minimization problem with constraints on $n[\varrho]$, $u[\varrho]$ and $e[\varrho]$ admits a unique solution, referred to as a \textit{local Gibbs state} (or more accurately they give an informal expression of the statistical equilibrium that is the solution to the constrained minimization problem). Then they prove that a state $\varrho_t$ solution to the quantum Liouville equation
$$
i \partial_t \varrho_t=[H,\varrho_t], \qquad [H,\varrho_t]=H\varrho_t-\varrho_t H, \qquad H=-\Delta+V+W,
$$
with a local Gibbs state with constraints $\{n_0,u_0,e_0\}$ as initial condition, converges, in an appropriate limit that we do not detail here, to a local Gibbs state with constraints $\{n_0(t),u_0(t),e_0(t)\}$. These latter constraints are solutions to the Euler equations with initial condition $\{n_0,u_0,e_0\}$. Above, $W$ is a two-body interaction potential used to define $e_I[\varrho]$.

In the one-body case, a similar constrained entropy minimization problem is central to the work of Degond and Ringhofer in their derivation of quantum fluid models from quantum dynamics, see \cite{DR}.

Under appropriate conditions on $\{n_0,u_0,e_0\}$, our main result in this work is justify rigorously the introduction of these local Gibbs states, and therefore to prove that indeed the constrained minimization problem admits a unique solution, \textit{for both the fermionic and bosonic cases}. We addressed in \cite{DP-CMP} the one-body problem for various quantum entropies, and the key difficulty is the lack of compactness required to handle the local energy constraint. The many-body case treated here introduce new difficulties, in particular the fact that there is now also a lack of compactness to treat the local density constraint, and as a consequence the current constraint. In the one-body setting, there is no such issue with the density since sequences of states with bounded energy have automatically sufficient compactness to pass to the limit in the density constraint, while this is not true in the many-body case. This latter fact is related to the convergence of one-body density matrices that will be defined further.

The main idea to go around the issue is to define two auxiliary optimization problems with \textit{global} constraints, and to prove the monotonicity of the entropy of the corresponding minimizers with respect to these global constraints. Along with classical compactness theorems for trace class operators, this allows us to prove that, while arbitrary sequences of states with bounded energy do not have sufficient compactness, the minimizing sequences of the entropy converge in a sufficiently strong sense that allows us to pass to the limit in the local constraints. The fermionic and bosonic cases are treated in the same fashion with essentially identical proofs.

The article is structured as follows: in Section \ref{prelim}, we introduce some background on second quantization; we next state our main result in Section \ref{mainR}, and prove the main theorem in Section \ref{proofT}. Finally, some standard technical results are given in an appendix 

\paragraph{Acknowledgment.} OP's work is supported by NSF CAREER Grant DMS-1452349 and NSF grant DMS-2006416. 

\section{Preliminaries} \label{prelim}
We introduce in this section some background that will be used throughout the paper.

\paragraph{Second quantization formalism.}  We have already defined the fermionic Fock space in the introduction, and define now the bosonic version, denoted by $\mathfrak{F}_b$. It is given by  the direct sum
\begin{equation*}
\mathfrak{F}_b : = \bigoplus_{n = 0}^{+\infty} \mathfrak{F}^{(n)}_b
\end{equation*}
where $\mathfrak{F}_b^{(n)} : = \mathfrak{h}^{\otimes_s n}$ is the $n$-fold symmetric tensor product of $\mathfrak{h}$ and $\mathfrak{h}^{\otimes_s 0} := \mathbb{C}$. We have
$$
\mathfrak{F}_b^{(n)}=L^2_s\big((\Rm^d)^n\big),
$$
where $L^2_s\big((\Rm^d)^n\big)$ is the space of symmetric square integrable complex functions on $(\Rm^d)^n$, that is, for $x_\ell \in \Rm^d$ , $\ell=1,\cdots,n$, $1 \leq, i,j \leq n$,
$$f(x_1,\cdots,x_i,\cdots,x_j,\cdots,x_n)=f(x_1,\cdots,x_j,\cdots,x_i,\cdots,x_n)$$
 when $f \in L^2_s((\Rm^d)^n)$. 

We denote by $\mathfrak{F}_{b/f}$ either the bosonic or fermionic Fock space, and represent an element $\psi$ of $\mathfrak{F}_{b/f}$ by the sequence $\psi=\{\psi^{(n)}\}_{n \in \NN}$, where $\psi^{(n)} \in \mathfrak{F}_{b/f}^{(n)}$. The spaces $\mathfrak{F}_{b/f}$ are Hilbert spaces when equipped with the norm
$$
\| \psi\| = \left( \sum_{n \in \NN} \| \psi^{(n)}\|^2_n\right)^{1/2}, \qquad \| \psi^{(n)}\|_n:=\| \psi^{(n)}\|_{L^2_{s/a}((\Rm^{d})^n)}.
$$
We use the same notation for the norms in  $\mathfrak{F}_{s}$ and $\mathfrak{F}_{a}$ since there will be no possible confusion in the sequel. The inner products associated with $\|\cdot \|$ and $\| \cdot \|_n$ are denoted by $(\cdot,\cdot)$ and $(\cdot,\cdot)_n$.

We denote by $\calJ_1:=\calJ_1(\mathfrak{F}_{b/f})$ the space of trace class operators on $\mathfrak{F}_{b/f}$. The trace with respect to $\calJ_1$ is denoted simply by $\Tr(\cdot)$, while the trace with respect to $\calJ_1(E)$ for $E$ a Hilbert space is denoted by $\Tr_E(\cdot)$. The space of bounded operators on $E$ is denoted $\calL(E)$.

We will refer to a ``state'', as a nonnegative, trace class operator on $\mathfrak{F}_{b/f}$ with trace equal to one. The set of states is denoted by $\calS$, i.e.
$$
\calS=\left\{\varrho \in \calJ_1: \; \varrho \geq 0,\quad \Tr(\varrho)=1 \right\}.
$$
  \begin{definition}(Second quantization) 
    Let $A$ be an operator acting on $\mathfrak{F}^{(k)}_{b/f}$, $k \geq 1$.  Its second quantization, denoted $\AA$, is defined by
    $$
      \mathbb{A} := \underbrace{0\oplus\cdots\oplus 0}_{k\;\mathrm{times}}  \oplus \bigoplus_{n = k}^{+\infty}\; \sum_{1\leq i_1<\cdots< i_k \leq n} (A)_{i_1,\cdots,i_k}
      $$
where $(A)_{i_1,\cdots,i_k}$ is the operator $A$ acting on the variables labeled $i_1,\cdots,i_k$ in $\mathfrak{F}_{b/f}^{(k)}$ and leaving the other variables invariant.

\end{definition}
It is customary to denote $d\Gamma(A)=\AA$. The second quantization of the identity on $\frakh$ is the number operator
\begin{equation*}
\mathcal{N} : = d\Gamma(\mathrm{Id}_{\mathfrak{h}}).
\end{equation*}
The identity on $\mathfrak{F}_{b/f}$ is simply denoted by $\mathrm{Id}$. The operator $\calN$ is self-adjoint on $\mathfrak{F}_{b/f}$ when equipped with the domain
$$
D(\calN)=\left\{ \psi=\{\psi^{(n)}\}_{n \in \NN} \in \mathfrak{F}_{b/f}: \; \sum_{n \in \NN} n^2 \|\psi^{(n)}\|^2_n < \infty\right\}.
$$
We consider states that are not necessarily diagonal on $\mathfrak{F}_{b/f}$, and therefore that do not commute in general with $\calN$. In particular, $\calN \varrho$ is not necessarily positive when $\varrho \geq 0$, and this leads us to introduce $\calN^{1/2}$, which is self-adjoint on $\mathfrak{F}_{b/f}$ with domain
$$
D(\calN^{1/2})=\left\{ \psi=\{\psi^{(n)}\}_{n \in \NN} \in \mathfrak{F}_{b/f}: \; \sum_{n \in \NN} n \|\psi^{(n)}\|^2_n < \infty\right\}.
$$
We then denote by $\calS_0$ the set of states $\varrho$ with finite average particle number, that is such that
$$
\Tr \left(\overline{\calN^{1/2} \varrho\, \calN^{1/2}}\right)<\infty,
$$
where $\overline{\AA}$ denotes the extension of an operator $\AA$ to $\mathfrak{F}_{b/f}$. We will drop the extension sign in the sequel for simplicity. 

\begin{definition}(1-particle density matrix) \label{onebody}
 Let $A$ be a bounded operator on $\frakh$ and consider its second quantization $\AA$. For a state  $\varrho \in \calS_0$, the 1-particle density matrix $\varrho^{(1)}$ is the unique nonnegative operator in $\calJ_1(\frakh)$ such that
\begin{equation} \label{def1}
\mathrm{Tr}_{\frakh}\left(A \varrho^{(1)}\right) = \mathrm{Tr} \left (\AA \varrho \right).
\end{equation}
\end{definition}

The fact that $\varrho^{(1)}$ is well-defined is classical and is established in Appendix for the sake of completeness. Note that since $\AA$ is not bounded in $\frakF_{b/f}$, relation \fref{def1} has actually to be understood as
$$
\mathrm{Tr}_{\frakh}\left(A \varrho^{(1)}\right) = \mathrm{Tr} \left (\BB \calN^{1/2} \varrho\,\calN^{1/2} \right),
$$
where
\be \label{defB}
\BB:=  0 \oplus \bigoplus_{n = 1}^{+\infty} n^{-1}\mathbb{A}_{(n)},
\ee
and then belongs to $\calL(\mathfrak{F}_{b/f})$ when $A \in \calL(\frakh)$. In \fref{defB}, $\mathbb{A}_{(n)}$ is the component of $\AA$ on the sector $\frakF_{b/f}^{(n)}$. Note that by setting $A=\mathrm{Id}_{\mathfrak{h}}$, we have the relation
$$
\mathrm{Tr}_{\frakh}\left(\varrho^{(1)}\right) = \mathrm{Tr} \left( \calN^{1/2} \varrho\ \calN^{1/2} \right).
$$
We will need as well the 2-particle density matrix for the definition the interaction potential. It is justified in the same manner as Definition \ref{onebody}.

\begin{definition}(2-particle density matrix)  Let $A$ be a bounded operator on $\frakF^{(2)}_{b/f}$ and consider its second quantization $\AA$. For a state  $\varrho \in \calS$ such that $\Tr \left(\calN \varrho\, \calN\right)$ is finite, the 2-particle density matrix $\varrho^{(2)}$ is the unique nonnegative operator in $\calJ_1(\frakF^{(2)}_{b/f})$ such that
  $$
\mathrm{Tr}_{\frakF^{(2)}_{b/f}}\left(A \varrho^{(2)}\right) = \mathrm{Tr} \left (\AA \varrho \right).
$$
\end{definition}

We define now the local constraints, first the density, current, and kinetic energy.

\paragraph{Local density, current, and kinetic energy constraints.}
Consider a state $\varrho \in \calS_0$ and its associated 1-particle density matrix $\varrho^{(1)} \in \calJ_1(\frakh)$. The local (1-particle) density $n[\varrho]$ of $\varrho$ is defined by duality by, for any $\varphi\in C^{\infty}_0(\mathbb{R}^d)$:
\begin{equation*}
\int_{\mathbb{R}^d} n[\varrho](x) \varphi(x) dx = \mathrm{Tr}_{\mathfrak{h}}\left(\varrho^{(1)} \varphi\right),
\end{equation*}
where we identify $\varphi$ and its associated multiplication operator.

Let $h_0=-\Delta$, equipped with domain $H^2(\Rm^d)$, and for $ \HH_0=d\Gamma(h_0)$, let $\calE_0$ be the following set:
$$
\calE_0=\left\{ \varrho \in \calS_0: \; \Tr\left( \HH_0^{1/2} \varrho \,\HH_0^{1/2} \right)<\infty \right\}.
$$
$\calE_0$ is the set of states with finite particle number and finite kinetic energy. We will need the following lemma in order to define the current and energy contraints. The straightforward proof is given in the Appendix for convenience of the reader. 

\begin{lemma} \label{enrho} Let $\varrho \in \calE_0$. Then $\varrho^{(1)}$ verifies $\mathrm{Tr}_\frakh \left( h_0^{1/2} \varrho^{(1)}\ h_0^{1/2} \right)<\infty$ and

  \be \label{eqkin} \mathrm{Tr}_\frakh \left( h_0^{1/2} \varrho^{(1)} h_0^{1/2} \right)= \Tr\left( \HH_0^{1/2} \varrho \,\HH_0^{1/2} \right).\ee
\end{lemma}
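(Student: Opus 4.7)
My plan is to approximate $h_0$ by bounded spectral truncations, apply Definition \ref{onebody} to these truncations, and then pass to the limit on both sides of \fref{eqkin} by monotone convergence.

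Concretely, I would let $P_\lambda=\mathbf{1}_{[0,\lambda]}(h_0)$ be the spectral projection of $h_0$ on $[0,\lambda]$, and set $h_\lambda := P_\lambda h_0 P_\lambda$. This $h_\lambda$ is bounded and nonnegative on $\frakh$ with $h_\lambda^{1/2}=h_0^{1/2}P_\lambda$, so Definition \ref{onebody} applies to $A=h_\lambda$. Interpreting the Fock-space trace via the bounded operator $\BB_\lambda=\bigoplus_{n\geq 1} n^{-1}\,d\Gamma(h_\lambda)_{(n)}$ as in the paper, and using that $\BB_\lambda$ and $\calN$ commute (both preserve sectors and $\calN$ is scalar per sector), so that $d\Gamma(h_\lambda)^{1/2}=\BB_\lambda^{1/2}\calN^{1/2}$, I would obtain
$$
\Tr_\frakh\!\left(P_\lambda h_0^{1/2}\varrho^{(1)} h_0^{1/2} P_\lambda\right) \;=\; \Tr\!\left(d\Gamma(h_\lambda)^{1/2}\varrho\,d\Gamma(h_\lambda)^{1/2}\right),
$$
both sides being finite and equal to $\Tr(\BB_\lambda\calN^{1/2}\varrho\,\calN^{1/2})$ since $\calN^{1/2}\varrho\,\calN^{1/2}$ is trace class.

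Next I would let $\lambda\to\infty$. Diagonalizing $\varrho^{(1)}=\sum_i \mu_i|\phi_i\rangle\langle\phi_i|$ and using that $\|P_\lambda h_0^{1/2}\phi_i\|^2$ increases to $\|h_0^{1/2}\phi_i\|^2\in[0,+\infty]$ by strong convergence $P_\lambda\to\mathrm{Id}_\frakh$, monotone convergence in the sum over $i$ shows that the left-hand side tends to $\Tr_\frakh(h_0^{1/2}\varrho^{(1)} h_0^{1/2})$. For the right-hand side, diagonalizing $\varrho=\sum_k p_k|\Psi_k\rangle\langle\Psi_k|$ and observing that on each $n$-particle sector $d\Gamma(h_\lambda)_{(n)}=\sum_{i=1}^n (h_\lambda)_i$ with $(h_\lambda)_i\uparrow(h_0)_i$ as quadratic forms, monotone convergence successively in $i$, in $n$, and in $k$ yields convergence to $\Tr(\HH_0^{1/2}\varrho\,\HH_0^{1/2})$. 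Equating the two limits gives \fref{eqkin}, and the finiteness of the left-hand side follows from $\varrho\in\calE_0$.

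The only step of real content is the rigorous interpretation of the traces at fixed $\lambda$: although $d\Gamma(h_\lambda)$ is unbounded on $\frakF_{b/f}$, its restriction to each sector is bounded by $n\|h_\lambda\|$, so the symmetric forms are controlled by $\|h_\lambda\|\,\Tr(\calN^{1/2}\varrho\,\calN^{1/2})$, exactly in the spirit of the paper's definition of $\BB$. The remainder is bookkeeping of monotone convergence in three nested sums, which I do not expect to present any serious obstacle.
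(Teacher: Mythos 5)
Your proof is correct, and while it follows the same overall strategy as the paper --- approximate $h_0$ by bounded operators, invoke Definition \ref{onebody} for the approximants, and pass to the limit --- the execution differs in a way worth noting. The paper regularizes with the resolvent, $h_\eps=h_0(\mathrm{Id}_\frakh+\eps h_0)^{-1}$, and on the one-body side it does \emph{not} argue by monotonicity: it extracts a weak-$*$ convergent subsequence of $h_\eps^{1/2}\varrho^{(1)}h_\eps^{1/2}$ in $\calJ_1(\frakh)$ and then identifies the limit with $h_0^{1/2}\varrho^{(1)}h_0^{1/2}$ by testing against operators of the form $(\mathrm{Id}_\frakh+h_0)^{-1}K(\mathrm{Id}_\frakh+h_0)^{-1}$ with $K$ compact; only the Fock-space side is handled by monotone convergence (after a Fourier-transform bound establishing $\|\AA^\eps_{(n)}\psi\|_n\leq\|d\Gamma(h_0)^{1/2}_{(n)}\psi\|_n$). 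Your spectral truncation $h_\lambda=h_0 P_\lambda$ is monotone in $\lambda$ on \emph{both} sides of the identity, so you can dispense with the compactness-and-duality identification entirely and run monotone convergence symmetrically; since $h_\eps\uparrow h_0$ as $\eps\downarrow 0$ as well, the same simplification was in fact available to the paper. What the paper's route buys is a slightly more explicit handle on $h_0^{1/2}\varrho^{(1)}h_0^{1/2}$ as an honest element of $\calJ_1(\frakh)$ (the weak-$*$ limit $\alpha$), whereas in your version you should make explicit that finiteness of $\sum_i\mu_i\|h_0^{1/2}\phi_i\|^2$ entails that each $\phi_i$ with $\mu_i>0$ lies in $D(h_0^{1/2})$ and that the closure of $h_0^{1/2}\varrho^{(1)}h_0^{1/2}$ is trace class --- a standard point, but it is the content of the first assertion of the lemma. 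Your commutation bookkeeping ($h_\lambda^{1/2}=h_0^{1/2}P_\lambda$, $d\Gamma(h_\lambda)^{1/2}=\BB_\lambda^{1/2}\calN^{1/2}$ sector by sector, cyclicity with the bounded factor $\BB_\lambda^{1/2}$ against the trace-class $\calN^{1/2}\varrho\,\calN^{1/2}$) is sound and at the same level of rigor as the paper's own treatment.
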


The current $u[\varrho]$ can be now defined by, for any $\Phi \in (C^{\infty}_0(\mathbb{R}^d))^d$,
\begin{equation*}
\int_{\mathbb{R}^d} u[\varrho](x) \cdot \Phi(x) dx = - i \mathrm{Tr}_{\mathfrak{h}}\left(\Phi \cdot \nabla \varrho^{(1)} +\frac{1}{2} \nabla \cdot \Phi \right), 
\end{equation*}
and  the kinetic energy $k[\varrho]$ by 
\begin{equation*}
\int_{\mathbb{R}^d} k[\varrho](x) \varphi(x) dx = - \mathrm{Tr}_{\mathfrak{h}}\left(\nabla \cdot (\varphi \nabla)\varrho^{(1)} \right).
\end{equation*}
 
Note that $u[\varrho]$ and $k[\varrho]$ are well-defined since Lemma \ref{enrho} implies that $\nabla \varrho^{(1)}$ and $\nabla \varrho^{(1)} \nabla$ are trace class. Formal calculations also show that $n[\varrho]$, $u[\varrho]$ and $k[\varrho]$ agree with the definitions given in \fref{consint} in the introduction based on the annihilation operator. Moreover, if $\{\mu_p\}_{p \in \Nm}$ and $\{\varphi_p\}_{p \in \Nm}$ denote the eigenvalues and eigenfunctions of $\varrho^{(1)}$, we have the familiar relations
$$
\left\{
\begin{array}{ll}
\ds n[\varrho]=\sum_{p \in \Nm} \mu_p |\varphi_p|^2, \qquad &\textrm{local density}\\
\ds u[\varrho] = \sum_{p\in\mathbb{N}} \mu_p \Im\left( \varphi_p^* \nabla \varphi_p\right),\qquad &\textrm{local current}\\ 
\ds k[\varrho]=\sum_{p \in \Nm} \mu_p |\nabla \varphi_p|^2, \qquad &\textrm{local kinetic energy}.
\end{array}
\right.
$$

The functions $n[\varrho]$, $u[\varrho]$, and $k[\varrho]$ are all in $L^1(\Rm^d)$ when $\varrho \in \calE_0$, and the series above converge in $L^1(\Rm^d)$. We have moreover the relations
$$
\|n[\varrho]\|_{L^1}=\Tr\left( \calN^{1/2} \varrho \,\calN^{1/2} \right), \qquad \|k[\varrho]\|_{L^1}=\Tr\left( \HH_0^{1/2} \varrho \,\HH_0^{1/2} \right).
$$

\paragraph{Definition of local total energy.} We define now the local potential and local interaction energy constraints. For this, let $v=v_+-v_-$, $v_\pm \geq 0$, and $w$ even, all real-valued, such that
$$
v_+ \in L^1_{\rm{loc}}(\Rm^d), \qquad v_-, w \in L^p(\Rm^d)+L^\infty(\Rm^d),
$$
with $p=1$ when $d=1$, $p>1$ when $d=2$, and $p=d/2$ when $d \geq 3$. We suppose that $w$ is classically stable of the second kind, that is, there exists a constant $C_0>0$ such that 
\be \label{class}
 \forall n \geq 2, \qquad \sum_{1 \leq i<j \leq n} w(x_i-x_j) \geq -C_0 n, \qquad \textrm{a.e. on } (\Rm^d)^n.
\ee
An example of such $w$ is the standard Coulomb potential $w(x)=|x|^{-1}$ when $d=3$.

For a state $\varrho$ with $\Tr \left(\calN \varrho\, \calN\right)<\infty$, the local interaction energy is formally defined by
$$
\widetilde e_{I}[\varrho](x)=\int_{\Rm^d} n[\varrho^{(2)}](x,y) w(x-y) dy, 
$$
where $n[\varrho^{(2)}](x,y)$ is the local density associated with the 2-particle density matrix $\varrho^{(2)}$ of $\varrho$. It is defined by duality by
\begin{equation*}
\int_{\mathbb{R}^d \times \Rm^d} n[\varrho^{(2)}](x,y) \varphi(x,y) dx dy = \mathrm{Tr}_{\mathfrak{F}_{b/f}^{(2)}}\left(\varrho^{(2)} \varphi\right),
\end{equation*}
for any symmetric test function $\varphi \in C^\infty_0(\Rm^d \times \Rm^d)$. The condition $\Tr \left(\calN \varrho\, \calN\right)<\infty$ is not natural in the minimization problem since the total energy constraint only involves $\widetilde{e}_I[\varrho]$ and not $\calN \varrho\, \calN$, and  it is therefore not clear how to define $n[\varrho^{(2)}]$ rigorously if $\Tr \left(\calN \varrho\, \calN\right)$ is not finite. We then introduce a modified interaction energy $e_I[\varrho]$ as follows. Let $r_0>C_0$ (for the $C_0$ defined in \fref{class}) and let
$$
\WW=  0 \oplus 0 \oplus \bigoplus_{n = 2}^{+\infty} \left(\sum_{1 \leq i<j \leq n} w(x_i-x_j) \right).
$$

The operator $\WW+r_0 \calN$ is strictly positive according to \fref{class}. Consider a state $\varrho$ such that
$$\Tr \left((\WW+r_0 \calN )^{1/2}\varrho\, (\WW+r_0 \calN )^{1/2}\right)<\infty,$$
and let
$$
\BB:=0 \oplus r_0^{1/2}  \oplus \bigoplus_{n = 2}^{+\infty}\left(\sum_{1 \leq i<j \leq n} w(x_i-x_j) + n r_0\right)^{1/2}  n^{-1}.
$$
The operator $\sigma_\varrho=\BB \varrho\, \BB$ verifies as a consequence $\Tr \left(\calN \sigma_\varrho\, \calN\right)<\infty$, and therefore has a  2-particle density matrix $\sigma^{(2)}_\varrho $. We then define
$$
e_I[\varrho](x):=\int_{\Rm^d} n[\sigma^{(2)}_\varrho](x,y)dy \geq 0 \qquad a.e.
$$
Note that $e_I[\varrho]$ is integrable since $n[\sigma^{(2)}] \in L^1(\Rm^d \times \Rm^d)$, and that
$$
\Tr \left((\WW+r_0 \calN )^{1/2}\varrho\, (\WW+r_0 \calN )^{1/2}\right)=\|e_I[\varrho]\|_{L^1}.
$$
When $\Tr \left(\calN \varrho\, \calN\right)$ is finite, the two definitions of the interaction energy given above are equivalent for the minimization problem since 
$$
e_I[\varrho]=\widetilde e_{I}[\varrho]+r_0 n[\varrho],
$$
and prescribing both $e_I[\varrho]$ and $n[\varrho]$ is then equivalent to prescribing both $\widetilde{e}_I[\varrho]$ and $n[\varrho]$.

Regarding the potential energy, the density $n[\varrho]$ must have sufficient decay at the infinity for the entropy of a state $\varrho$ to be bounded below. We then introduce a nonnegative confining potential $v_c \in L^1_{\rm{loc}}(\Rm^d)$ with $v_c \to +\infty$ as $|x| \to +\infty$ such that
\be \label{GT}
\forall t>0, \qquad  \int_{\Rm^d} e^{- t v_c(x)} dx < \infty,
\ee
and suppose that $v_c n[\varrho] \in L^1(\Rm^d)$. With $h_c=h_0+v_c$, defined in the sense of quadratic forms and self-adjoint on an appropriate domain, the condition \fref{GT} ensures by the Golden-Thompson inequality that the operator $e^{-t h_c}$ is trace class for all $t>0$.

Let finally $V=v+v_c$. The local potential energy is defined by
$$
e_P[\varrho]= V n[\varrho].
$$
Again, for the minimization problem, prescribing both $V n [\varrho]$ and $n[\varrho]$ is equivalent to prescribing both $v n [\varrho]$ and $n[\varrho]$, and therefore the introduction of $v_c$ in the constraint does not change the minimizer. The local total energy of a state $\varrho$ is then
$$
e[\varrho]=k[\varrho]+e_P[\varrho]+e_I[\varrho].
$$

\paragraph{The energy space.} For $n\geq 2$, let the symmetric $n$-body operator
$$
H_n=\sum_{j=1}^n \left(-\Delta_{x_j} + V(x_j)+r_0 \right)+\sum_{1 \leq i<j \leq n} w(x_i-x_j),
$$
 with for $n=1$,
  $$
  H_1=-\Delta+V+r_0.
  $$
The regularity assumptions on $v_-$ and $w$ imply that the $n$-body potential in $H_n$
$$
\sum_{j=1}^n v_-(x_j)+\sum_{1 \leq i<j \leq n} w(x_i-x_j)
$$
is infinitesimally $(-\Delta)$-form bounded as an adaptation of Kato's theorem, see e.g. \cite{RS-80-2} in the case $d=3$. A first consequence of this is that there exists a constant $\gamma \in (0,1)$ independent of $n$, and that $r_0$ can be chosen sufficiently large, such that
\be \label{Hbelow}
\gamma\, H_n^c + \gamma n\leq H_n, \qquad H_n^c=\sum_{j=1}^n \left(-\Delta_{x_j} + v_c(x_j) \right),
  \ee
  in the sense of operators. Second, $H_n$ is associated with a quadratic form closed on the space of functions $\psi \in \frakF_{b/f}^{(n)}$ such that $\psi \in H^1((\Rm^d)^n)$ and
  $$
  \int_{(\Rm^d)^n} (v_+(x_1)+v_c(x_1))|\psi(x_1,\cdots,x_n)|^2 dx_1 \cdots dx_n < \infty.
  $$

  With an abuse of notation, we will also denote  by $H_n$ the self-adjoint realization with domain $D(H_n)$ of the quadratic form. In the same way, $H_1$ is the self-adjoint realization of $-\Delta+V+r_0$ defined in the sense of quadratic forms.
  
  Let $\HH$ be the second quantization of $H_n$, that is
  $$\HH=d\Gamma(h_0+V+r_0)+\WW.$$
  It is self-adjoint with domain
  $$
  D(\HH)=\Cm \oplus D(h_0+V)\oplus \bigoplus_{n = 2}^{+\infty} D(H_n),
  $$
   see \cite[Theorem 4.2]{arai}. The energy space that we will use in the minimization is finally the following:
  $$
\calE=\left\{ \varrho \in \calS: \; \Tr\left( \HH^{1/2} \varrho \,\HH^{1/2} \right)<\infty \right\}.
$$
Note that
\be \label{eqeneg}
\Tr\left( \HH^{1/2} \varrho \,\HH^{1/2} \right)=\|e[\varrho]\|_{L^1},
\ee
and that condition \fref{Hbelow} yields
\be \label{below2}
\gamma \Tr\left( d\Gamma(h_c)^{1/2} \varrho \,d\Gamma(h_c)^{1/2} \right) + \gamma \Tr\left( \calN^{1/2} \varrho \, \calN^{1/2} \right)
\leq \Tr\left( \HH^{1/2} \varrho \,\HH^{1/2} \right).
\ee
This implies in particular that $\calE \subset \calE_0$.\\

We are now in position to state our main result.

\section{Main result} \label{mainR}

The  entropy of a state $\varrho \in \calS$ is defined by
$$
S(\varrho)= \Tr (\varrho \log \varrho)= \sum_{i \in \NN} \rho_i \log \rho_i,
$$
for $\{\rho_i\}_{i \in \NN}$ the eigenvalues of $\varrho$ (counted with multiplicity and forming a nonincreasing sequence; if $\varrho$ has a finite rank, then $\rho_i=0$ when $i \geq N$ for some $N$). Note that $S$ is always well-defined in $[-\infty,0]$ since $0 \leq \rho_i \leq 1$ as $\Tr(\varrho)=1$.

The set of admissible local constraints (we will sometimes refer to these as ``moments'' in the sequel) is defined by
\begin{align*}
\calM = \Big\{(n,u,e)\in L_+^1(\mathbb{R}^d)\times (L^1(\mathbb{R}^d))^d\times L_+^1(\mathbb{R}^d)\hspace{5cm}
\\ \qquad  \qquad\textrm{such that }\quad (n,u,e) = (n[\varrho],u[\varrho],e[\varrho])\quad\textrm{for at least one }\varrho\in\calE \Big\}.
\end{align*}
Above, $L^1_+(\mathbb{R}^d) = \{ \varphi\in L^1(\mathbb{R}^d):\; \varphi\geq 0\; a.e.\}$. In other terms, $\calM$ consists of the set of functions $(n,u,e)$ that are the local density, current and total energy of at least one state with finite energy. It is not difficult to construct admissible constraints, for instance by taking moments of the Gibbs state
$$
\frac{e^{-\HH}}{ \Tr (e^{-\HH})}.
$$
To the best of our knowledge, the characterization of $\calM$ remains to be done. 

For $(n_0,u_0,e_0) \in \calM$, the feasible set is then given by
\begin{equation*}
\calA(n_0,u_0,e_0) = \Big\{\varrho\in\calE:\; n[\varrho] = n_0,\; u[\varrho] = u_0\;\textrm{and}\; e[\varrho] = e_0 \Big\}.
\end{equation*}
The set $\calA(n_0,u_0,e_0)$ is not empty by construction since  $(n_0,u_0,e_0)$ is admissible. \\

Our main result is the next theorem.

\begin{theorem}\label{thm:Main} Let $(n_0,u_0,e_0) \in \calM$, with $n_0 v_c \in L^1(\Rm^d)$. Then, the minimization problem
  $$
  \inf_{\calA(n_0,u_0,e_0)} S
  $$
  admits a unique solution.
\end{theorem}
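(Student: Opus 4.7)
The entropy $\varrho\mapsto\Tr(\varrho\log\varrho)$ is strictly convex on $\calS$ (via the operator Jensen inequality applied to $t\mapsto t\log t$), while the feasible set $\calA(n_0,u_0,e_0)$ is convex since the three constraint maps $\varrho\mapsto n[\varrho],u[\varrho],e[\varrho]$ are linear. Strict convexity rules out two distinct minimizers, so uniqueness reduces to existence.

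\textbf{Existence: direct method.} I would take a minimizing sequence $\varrho_k\in\calA(n_0,u_0,e_0)$. Since $S\leq 0$, $\Tr(\varrho_k)=1$, and $\Tr(\HH^{1/2}\varrho_k \HH^{1/2})=\|e_0\|_{L^1}$, the bound \fref{below2} yields uniform control on $\Tr(\calN^{1/2}\varrho_k\calN^{1/2})$ and on $\Tr(d\Gamma(h_c)^{1/2}\varrho_k\, d\Gamma(h_c)^{1/2})$. After extraction, $\varrho_k$ converges weakly-$\ast$ (against compact operators) to some $\varrho_\infty\geq 0$ in $\calJ_1(\frakF_{b/f})$. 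Lower semicontinuity of $S$ (Fatou on the eigenvalues together with the concavity of $t\log t$ on $[0,1]$) gives $S(\varrho_\infty)\leq\liminf_k S(\varrho_k)$, so the remaining task is to show that $\varrho_\infty\in\calA(n_0,u_0,e_0)$.

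\textbf{Main obstacle.} Weak-$\ast$ convergence in $\calJ_1$ may lose trace, so only $\Tr(\varrho_\infty)\leq 1$ is automatic. From Lemma \ref{enrho} and \fref{below2} one has the one-body bound $\Tr_\frakh(h_c^{1/2}\varrho_k^{(1)}h_c^{1/2})\leq C$, and the trace-class character of $e^{-th_c}$ coming from \fref{GT} then provides compactness of $\{\varrho_k^{(1)}\}$ in $\calJ_1(\frakh)$ along a subsequence. Unlike in the one-body setting of \cite{DP-CMP}, however, nothing directly identifies this limit with $\varrho_\infty^{(1)}$: mass could escape to higher particle sectors in Fock space, so that $n[\varrho_\infty]\leq n_0$ without equality, with analogous losses for $u[\varrho_\infty]$ and for the interaction part of the energy. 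The kinetic and potential pieces of $e[\varrho]$ are only weakly lower semicontinuous, so $e[\varrho_\infty]$ might also be strictly less than $e_0$.

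\textbf{Resolution via global auxiliary problems and monotonicity.} Following the strategy announced by the authors, I would introduce one or two auxiliary entropy minimization problems in which the pointwise constraints are replaced by their integrated counterparts $N_0=\int n_0\,dx$, $I_0=\int u_0\,dx$, and $E_0=\int e_0\,dx$. For such global constraints the constraint maps are weakly-$\ast$ continuous on $\calE$, so the direct method produces explicit Gibbs-like minimizers and a well-defined auxiliary infimum $S_{\mathrm{aux}}(N,I,E)$. The crucial ingredient is a strict monotonicity of $S_{\mathrm{aux}}$ in $(N,E)$, derived from the explicit form of the auxiliary optimizer (an exponential of a linear combination of $\HH$, $\calN$, and the total current). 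Any loss of mass or of energy in $\varrho_k\to\varrho_\infty$ would then force $\varrho_\infty$ to correspond to moments strictly smaller than $(N_0,I_0,E_0)$, hence to a strictly lower auxiliary entropy than the original minimizing sequence, contradicting the chain $S(\varrho_k)\to\inf_\calA S\geq S_{\mathrm{aux}}(N_0,I_0,E_0)$. This rules out escape to infinity in particle number or energy, upgrades $\varrho_k^{(1)}\to\varrho_\infty^{(1)}$ to strong $\calJ_1(\frakh)$-convergence (with an analogous statement for $\sigma^{(2)}_{\varrho_k}$ used in the interaction term), and allows one to pass to the limit in the local constraints by duality against $C_0^\infty$ test functions. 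The hard part of the proof is precisely this step: constructing the right auxiliary problems and establishing the monotonicity-based upgrade from weak to strong convergence; once that is done, the rest is routine compactness and lower semicontinuity.
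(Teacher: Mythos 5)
Your overall skeleton --- strict convexity for uniqueness, the direct method for existence, identification of the loss of compactness in the density and energy constraints as the main obstacle, and resolution via auxiliary globally-constrained problems whose optimal values are strictly monotone in the constraints --- is exactly the paper's strategy. But the step you yourself flag as the hard part is where your proposal has a genuine gap, in two respects. First, the auxiliary problem you describe, with the integrated constraints $(N_0,I_0,E_0)$ imposed simultaneously and a minimizer given by the exponential of a linear combination of $\HH$, $\calN$ and the total current, is precisely the formulation the paper goes out of its way to avoid: to show that every admissible pair (particle number, energy) is attained by such a Gibbs state one would have to control the minimal energy of $\varrho_{\alpha,\beta}$ at fixed average particle number, i.e.\ the ground-state problem for the interacting Hamiltonian $\HH$ in both statistics, and the current term adds further boundedness-below and trace-class issues. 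The paper instead decouples the constraints into two one-parameter families, $\calA_{g,e}(b)$ (global energy) and $\calA_g(a)$ (global particle number), treated in that order. Second, and more seriously, once decoupled you cannot minimize $S$ on $\calA_g(a)$: nothing there controls the energy, so $S$ is unbounded below, the auxiliary infimum is $-\infty$, and the monotonicity argument collapses. The paper's fix is to first prove $\|e[\varrho^\star]\|_{L^1}=\|e_0\|_{L^1}$ by comparing $\inf_{\calA_{g,e}(b^\star)}S$ with $\inf_{\calA_{g,e}(b)}S$ (Proposition \ref{gminS}), and only then to handle the density by replacing $S$ with the free energy $F_\beta=\beta^{-1}S+\Tr(\HH^{1/2}\cdot\,\HH^{1/2})$, which \emph{is} bounded below and uniquely minimized on $\calA_g(a)$ by a Gibbs state $\varrho_{\alpha_0(a),\beta}$ for $\beta$ small enough (Proposition \ref{gminF}); the already-established energy equality is exactly what converts the entropy inequality \fref{ineqSS} into the free-energy inequality \fref{ineqN2} needed for that second step. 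Your proposal contains neither the free energy nor the ordering, so the monotonicity-based contradiction cannot be run as stated.

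Two smaller points. No global constraint on the current is needed at all: once $\|n[\varrho^\star]\|_{L^1}=\|n_0\|_{L^1}$ and $\|e[\varrho^\star]\|_{L^1}=\|e_0\|_{L^1}$ are known, the weighted one-particle density matrices $(h_0+\mathrm{Id}_\frakh)^{1/2}\varrho_m^{(1)}(h_0+\mathrm{Id}_\frakh)^{1/2}$ converge strongly in $\calJ_1(\frakh)$ and the current constraint passes to the limit for free (Proposition \ref{propegal}). And the trace is not actually lost along the minimizing sequence: $(\mathrm{Id}+\HH)^{-1}$ is compact, which upgrades the weak-$*$ convergence of $\varrho_m$ to strong convergence in $\calJ_1$ and gives $\Tr(\varrho^\star)=1$ directly (Lemma \ref{lem:comp}).
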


We expect the minimizer $\varrho^\star$ to be a local Gibbs state with Hamiltonian $\HH^\star$, for $\HH^\star$ the second quantization of some two-body interaction Hamiltonian involving the Lagrange multipliers (which are functions here) associated with the constraints. While a formal derivation using standard calculus of variations techniques is quite straightforward (this is actually the formal expression given in the work of Nachtergaele and Yau), a rigorous derivation appears to be quite difficult. It was achieved in the one-particle situation in \cite{MP-JSP, DP-CVPDE, DP-JFA} in various settings.


\paragraph{Outline of the proof.} It will be shown in Section \ref{secpropS} that the entropy of states with fixed total energy is bounded below, and that the entropy is lower semi-continuous on the energy space $\calE$. The proof of the theorem therefore hinges upon showing that minimizing sequences satisfy the constraints in the limit. Standard weak-$*$ compactness theorems in the space of trace class operators show that minimizing sequences converge in some weak sense to an operator $\varrho^\star$ with finite energy, and the convergence is sufficiently strong to obtain that $\varrho^\star$ is a state, i.e. that $\Tr(\varrho^\star)=1$. Lower semi-continuity of the entropy yields moreover
 \be \label{ineqS}
S(\varrho^\star) \leq   \inf_{\calA(n_0,u_0,e_0)} S.
\ee
It is not possible to identify at that stage the local moments of $\varrho^\star$, and as a consequence to show that $\varrho^\star$ belongs to the feasible set $\calA(n_0,u_0,e_0)$. The core of the proof consists then in showing that the moments of the minimizing sequences converge in a strong sense, allowing us to obtain that
\be \label{egalconst}
n[\varrho^\star] = n_0,\qquad u[\varrho^\star]=u_0, \qquad e[\varrho^\star] = e_0,\ee
which, together with \fref{ineqS} and the strict convexity of $S$, proves Theorem \ref{thm:Main}.

Our strategy to recover strong convergence follows the general method we introduced in \cite{DP-CMP}, with some important differences needed to handle the many-body nature of the problem. It is based on defining minimization problems with global constraints. Consider the following sets: for $a>0$, let 
 \begin{equation} \label{Ag}
\calA_g(a) = \Big\{\varrho\in\calE:\; \|n[\varrho]\|_{L^1}=a \Big\},
\end{equation}
and
\begin{equation} \label{Age}
\calA_{g,e}(a) = \Big\{\varrho\in\calE:\; \|e[\varrho]\|_{L^1}=a \Big\}.
\end{equation}
Starting from \fref{ineqS}, we will prove the following  crucial two inequalities: if
$$\|e[\varrho^\star]\|_{L^1}=b^\star \leq b=\|e[\varrho]\|_{L^1}, \qquad \|n[\varrho^\star]\|_{L^1}=a^\star \leq a=\|n[\varrho]\|_{L^1},$$
then
\be \label{inimp}
\inf_{\calA_{g}(a^\star)} F_\beta \leq \inf_{\calA_{g}(a)} F_\beta, \qquad \inf_{\calA_{g,e}(b^\star)} S \leq \inf_{\calA_{g,e}(b)} S,
\ee
where $F_\beta$ is the free energy at temperature $\beta^{-1}$,
$$
F_\beta(\varrho)= \beta^{-1} S(\varrho) + \Tr (\HH^{1/2} \varrho \HH^{1/2} ), \qquad \varrho \in \calE.
$$
We will prove that the minima of $F_\beta$ in $\calA_{g}(a)$ and of $S$ in $\calA_{g,e}(b)$ are achieved by using (global) Gibbs states of the form
$$
\varrho_{\alpha,\beta}=\frac{e^{- \HH_{\alpha,\beta}}}{\Tr (e^{- \HH_{\alpha,\beta}})},
$$
where for $(\alpha,\beta) \in \Rm_+ \times \Rm_+^*$, $\HH_{\alpha,\beta}= \beta \HH+\alpha \calN$.

Intuitively, \fref{inimp} is only possible if $b^\star=b$, $a^\star=a$, and if the inequalities are equalities. Call indeed $-S$ the physical entropy. If we accept the heuristics that the equilibrium physical entropy maximizes disorder, the equilibrium state with the largest energy should have the largest physical entropy, which contradicts the second inequality in \fref{inimp} if $b^\star<b$. In the same way, we expect the equilibrium state with the largest average number of particles to lose the largest amount of energy to thermal fluctuations, and therefore to have a lower equilibrium free energy than the equilibrium state with less particles. This contradicts the first inequality in \fref{inimp} if $a^\star<a$. An important part of the proof is to make these arguments rigorous.

Once we know that $\|e[\varrho^\star]\|_{L^1}=\|e_0\|_{L^1}$, and $\|n[\varrho^\star\|_{L^1}=\|n_0\|_{L^1}$, arguments for nonnegative operators of the type ``weak convergence plus convergence of the norms imply strong convergence'' lead to \fref{egalconst}.

Note that it is important to treat the density and energy constraints separately. If one were to set an optimization problem with global constraints both on the density and the energy, one would have to study the minimal possible energy of Gibbs states of the form $\varrho_{\alpha,\beta}$ for a fixed average number of particles. This is not direct as this requires to investigate the ground state of the Hamiltonian $\HH$ for the interaction potential $w$ for both the fermionic and bosonic cases. By separating the two constraints, we circumvent this issue and can then achieve arbitrary low energy states by simply decreasing the temperature and  the chemical potential (which is $-\alpha$ here).

In addition, it is necessary to treat the energy constraint first and to obtain that $\|e[\varrho^\star]\|_{L^1}=\|e_0\|_{L^1}$ before handling the density. This allows us to introduce the free energy in \fref{inimp}, which is bounded below and admits a minimizer on $\calA_{g}(a)$. For otherwise we would work with the problem
$$
\inf_{\calA_{g}(a)} S,
$$
which does not have a solution since $S$ is not bounded below on $\calA_{g}(a)$.\\

The rest of the article is dedicated to the proof of Theorem \ref{thm:Main}.

\section{Proof of the theorem} \label{proofT}

In Section \ref{secpropS}, we show that the entropy is bounded below and lower semi-continuous for states in the feasible set $\calA(n_0,u_0,e_0)$. Section \ref{secseq} consists in the core of the proof where we show that minimizing sequences converge in a strong sense. In Sections \ref{propproof1}, \ref{propproof2}, and \ref{propproof3}, we give the proofs of some important results that we were needed in Section \ref{secseq}. Finally, an appendix collects the proofs of some technical results.

\subsection{Properties of the entropy} \label{secpropS}
We will use the relative entropy between two states $\varrho$ and $\sigma$, which is defined by
$$
\calF(\varrho,\sigma)= \Tr \big(\varrho (\log \varrho-\log \sigma) \big) \in [0, \infty].
$$
It is set to the infinity when the kernel of $\sigma$ is not included in the kernel of $\varrho$. See e.g. \cite{wehrl} for more details about the relative entropy. We recall that $e^{-\beta h_c}$ is trace class for any $\beta>0$ by assumption \fref{GT}. According to \cite[Prop. 5.2.27]{brat2} for the bosonic case, and \cite[Prop. 5.2.22]{brat2} for the fermionic case, this implies that $e^{-\beta d\Gamma(h_c)}$ is trace class as well. These trace results are essentially the only parts in the proof where a distinction between fermions and bosons is made. Let then
  $$
\varrho_c=\frac{e^{-\beta d\Gamma(h_c)}}{ \Tr (e^{-\beta d\Gamma(h_c)})},
$$
 which is a state. We have the following result, which is a straightforward consequence of the nonnegativity of the relative entropy.

\begin{lemma} \label{Sbelow} Let $\varrho \in \calS$ with $\Tr\big( d\Gamma(h_c)^{1/2} \varrho\, d\Gamma(h_c)^{1/2} \big)$ finite. Then
  $$
  S(\varrho) \geq - \beta \Tr\big( d\Gamma(h_c)^{1/2} \varrho\, d\Gamma(h_c)^{1/2} \big) - \log \Tr (e^{-\beta d\Gamma(h_c)}).
  $$
\end{lemma}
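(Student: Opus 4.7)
The strategy is simply to compare $\varrho$ to the reference Gibbs state $\varrho_c = e^{-\beta d\Gamma(h_c)}/Z$, where $Z := \Tr(e^{-\beta d\Gamma(h_c)})$, and use $\calF(\varrho,\varrho_c)\geq 0$. Since, by the reference to \cite[Prop. 5.2.27]{brat2} / \cite[Prop. 5.2.22]{brat2} recalled just above the statement, $e^{-\beta d\Gamma(h_c)}$ is trace class for both bosons and fermions, $\varrho_c$ is well-defined, strictly positive with bounded inverse on each sector (in the appropriate sense), and has full range, so its kernel is trivial. In particular, the inclusion of kernels required in the definition of $\calF(\varrho,\varrho_c)$ is automatic, and $\calF(\varrho,\varrho_c)\in[0,\infty]$ makes sense.

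The key algebraic step is the identity
\begin{equation*}
\log \varrho_c = -\beta\, d\Gamma(h_c) - (\log Z)\,\mathrm{Id},
\end{equation*}
from which one formally obtains
\begin{equation*}
\calF(\varrho,\varrho_c) = S(\varrho) + \beta\, \Tr\bigl(\varrho\, d\Gamma(h_c)\bigr) + \log Z,
\end{equation*}
and rearranging using $\calF(\varrho,\varrho_c)\geq 0$ yields the desired inequality. Writing the middle trace in the symmetric form $\Tr(d\Gamma(h_c)^{1/2}\varrho\, d\Gamma(h_c)^{1/2})$, which is exactly the quantity assumed to be finite in the statement, gives the announced form.

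The main technical obstacle, and the only point that is not one-line, is making rigorous sense of $\Tr(\varrho\log\varrho_c)$ as $\log\varrho_c$ is unbounded on $\frakF_{b/f}$. I would handle this by an approximation argument: introduce the bounded truncations $d\Gamma(h_c)_N := d\Gamma(h_c)\wedge N$ (or equivalently work with the spectral projector $\mathbf{1}_{d\Gamma(h_c)\leq N}$), set $\varrho_c^{(N)} = e^{-\beta d\Gamma(h_c)_N}/Z_N$, and apply the bounded-operator version of the relative entropy inequality to get
\begin{equation*}
S(\varrho)\geq -\beta\,\Tr\!\bigl(d\Gamma(h_c)_N^{1/2}\varrho\, d\Gamma(h_c)_N^{1/2}\bigr) - \log Z_N.
\end{equation*}
By monotone convergence on the symmetric trace (since $\varrho\geq 0$ and $d\Gamma(h_c)_N\nearrow d\Gamma(h_c)$), the first term on the right converges to $-\beta\, \Tr(d\Gamma(h_c)^{1/2}\varrho\, d\Gamma(h_c)^{1/2})$, which is finite by hypothesis, and $\log Z_N\to \log Z$ by monotone convergence of traces applied to $e^{-\beta d\Gamma(h_c)_N}$. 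Passing to the limit $N\to\infty$ produces the stated bound.

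Overall the proof is short: the conceptual content is the standard Gibbs variational principle $S(\varrho)+\beta\Tr(\varrho H)\geq -\log\Tr e^{-\beta H}$ applied to $H = d\Gamma(h_c)$, and essentially all of the work is the functional-analytic justification of manipulating the unbounded operator $d\Gamma(h_c)$ inside a trace against a general state $\varrho\in\calS$ whose only regularity is finiteness of $\Tr(d\Gamma(h_c)^{1/2}\varrho\, d\Gamma(h_c)^{1/2})$.
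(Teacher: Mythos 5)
Your proof is the paper's proof: both rest on the nonnegativity of the relative entropy $\calF(\varrho,\varrho_c)$ against the reference Gibbs state $\varrho_c=e^{-\beta d\Gamma(h_c)}/\Tr(e^{-\beta d\Gamma(h_c)})$, together with the identity $\calF(\varrho,\varrho_c)=S(\varrho)+\beta\Tr\big(d\Gamma(h_c)^{1/2}\varrho\, d\Gamma(h_c)^{1/2}\big)+\log\Tr(e^{-\beta d\Gamma(h_c)})$; the paper states exactly this and dismisses the justification of $\Tr(\varrho\, d\Gamma(h_c))=\Tr(d\Gamma(h_c)^{1/2}\varrho\, d\Gamma(h_c)^{1/2})$ as ``a regularization that we do not detail.'' The one place where you go beyond the paper, the specific regularization, does not work as written: with $d\Gamma(h_c)_N:=d\Gamma(h_c)\wedge N$ the operator $e^{-\beta d\Gamma(h_c)_N}$ is bounded below by $e^{-\beta N}\,\mathrm{Id}$ on the infinite-dimensional Fock space, so $Z_N=+\infty$ for every $N$ and your truncated inequality is vacuous; $\log Z_N$ cannot converge to $\log Z$. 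The fix is that no truncation of the partition function is needed at all: keep $\varrho_c$ and $Z$ fixed, and only regularize the energy term, e.g.\ by expanding $\varrho=\sum_p\rho_p\ket{\psi_p}\bra{\psi_p}$ and using monotone convergence on $\sum_p\rho_p\,\big(\psi_p,\,d\Gamma(h_c)(\mathrm{Id}+\eps\, d\Gamma(h_c))^{-1}\psi_p\big)$ as $\eps\to 0$ (the hypothesis of the lemma says precisely that the limit $\sum_p\rho_p\|d\Gamma(h_c)^{1/2}\psi_p\|^2$ is finite), which identifies $-\Tr(\varrho\log\varrho_c)$ with $\beta\Tr\big(d\Gamma(h_c)^{1/2}\varrho\, d\Gamma(h_c)^{1/2}\big)+\log Z$ and completes the argument.
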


\begin{proof}
  Let $\varrho$ satisfy the assumptions in the lemma. Then, 
$$
\calF(\varrho,\varrho_c)=S(\varrho)+\beta \Tr\big( (d\Gamma(h_c))^{1/2} \varrho (d\Gamma(h_c))^{1/2} \big)+ \log\Tr (e^{-d\Gamma(h_c)}) \geq 0,
$$
which proves the result. Note that there is a formal calculation that has to be justified, i.e. $\Tr(\varrho\, d\Gamma(h_c) )=\Tr\big( (d\Gamma(h_c))^{1/2} \varrho (d\Gamma(h_c))^{1/2} \big)$. When $\varrho\, d\Gamma(h_c)$  is not trace class, this is done by a regularization that we do not detail.
\end{proof}

\medskip

The next result  follows also directly from the properties of the relative entropy.

\begin{lemma} \label{contS} Let $\varrho \in \calS$, and consider a sequence of states such that $\varrho_m$ converges to $\varrho$ weak-$*$ in $\calJ_1$ as $m \to \infty$. Suppose moreover that there exists $C>0$ independent of $m$ such that
  $$
  \Tr\big( d\Gamma(h_c)^{1/2} \varrho\, d\Gamma(h_c)^{1/2} \big)+\Tr\big( d\Gamma(h_c)^{1/2} \varrho_m\, d\Gamma(h_c)^{1/2} \big) \leq C.$$
  Then
  $$
  S(\varrho) \leq \liminf_{m \to \infty} S(\varrho_m).
  $$
  \end{lemma}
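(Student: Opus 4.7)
The plan is to express $S$ through the relative entropy with respect to the reference Gibbs state $\varrho_c$ and then exploit the weak-$*$ lower semi-continuity of $\calF(\cdot,\varrho_c)$. Pick an arbitrary $\beta>0$. Condition \fref{GT} ensures that $e^{-\beta d\Gamma(h_c)}$ is trace class, so $\varrho_c := e^{-\beta d\Gamma(h_c)}/Z_\beta$ with $Z_\beta := \Tr(e^{-\beta d\Gamma(h_c)})$ is a well-defined state. Exactly as in the computation carried out in the proof of Lemma \ref{Sbelow}, for any state $\sigma$ with $\Tr\big(d\Gamma(h_c)^{1/2}\sigma\, d\Gamma(h_c)^{1/2}\big)<\infty$ one has the identity
\[
\calF(\sigma,\varrho_c) \;=\; S(\sigma) + \beta\,\Tr\big(d\Gamma(h_c)^{1/2}\sigma\, d\Gamma(h_c)^{1/2}\big) + \log Z_\beta.
\]
Thanks to the uniform energy bound assumed in the statement, this identity applies both to $\sigma=\varrho$ and, for every $m$, to $\sigma=\varrho_m$.

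Next, I would invoke the weak-$*$ lower semi-continuity of $\calF(\cdot,\varrho_c)$, a standard property of the relative entropy (see \cite{wehrl}). Applied along the sequence $\varrho_m\to\varrho$ this gives $\calF(\varrho,\varrho_c)\leq \liminf_{m\to\infty}\calF(\varrho_m,\varrho_c)$. Plugging the identity of the previous paragraph into both sides and cancelling the $\log Z_\beta$ yields
\[
S(\varrho) + \beta\,\Tr\big(d\Gamma(h_c)^{1/2}\varrho\, d\Gamma(h_c)^{1/2}\big) \;\leq\; \liminf_{m\to\infty}\Big[S(\varrho_m) + \beta\,\Tr\big(d\Gamma(h_c)^{1/2}\varrho_m\, d\Gamma(h_c)^{1/2}\big)\Big].
\]

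Finally, I use the elementary inequality $\liminf_m(a_m+b_m)\leq \liminf_m a_m + \limsup_m b_m$, valid because the energies $b_m := \beta\,\Tr(d\Gamma(h_c)^{1/2}\varrho_m\, d\Gamma(h_c)^{1/2})$ are uniformly bounded by $\beta C$. Rearranging:
\[
S(\varrho)\;\leq\; \liminf_{m\to\infty} S(\varrho_m) \;+\; \beta\Big[\limsup_{m\to\infty}\Tr\big(d\Gamma(h_c)^{1/2}\varrho_m\, d\Gamma(h_c)^{1/2}\big) \;-\; \Tr\big(d\Gamma(h_c)^{1/2}\varrho\, d\Gamma(h_c)^{1/2}\big)\Big].
\]
The bracket is controlled by $2C$ uniformly in $\beta$. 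Since the inequality holds for every $\beta>0$, letting $\beta\to 0^+$ gives $S(\varrho)\leq \liminf_m S(\varrho_m)$, as desired. The only delicate step is the weak-$*$ lower semi-continuity of $\calF(\cdot,\varrho_c)$; once that is taken for granted, the key trick is that $\beta$ is a free parameter we can send to zero, which absorbs the energy discrepancy that would otherwise obstruct lower semi-continuity of $S$.
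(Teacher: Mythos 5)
Your proposal is correct and follows essentially the same route as the paper: rewrite $S$ via the relative entropy $\calF(\cdot,\varrho_c)$ with respect to the Gibbs state $\varrho_c$, invoke weak lower semi-continuity of the relative entropy, and let the free parameter $\beta\to 0^+$ absorb the (uniformly bounded) energy discrepancy. The only cosmetic difference is bookkeeping: the paper discards the nonnegative term $\beta\,\Tr\big(d\Gamma(h_c)^{1/2}\varrho_m\,d\Gamma(h_c)^{1/2}\big)$ by bounding it with $C\beta$ directly, whereas you retain both energy terms and use $\liminf_m(a_m+b_m)\leq\liminf_m a_m+\limsup_m b_m$, which is equally valid here.
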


\begin{proof} Write
$$
S(\varrho_m)=\calF(\varrho_m,\varrho_c)-\beta \Tr\big( (d\Gamma(h_c))^{1/2} \varrho_m (d\Gamma(h_c))^{1/2} \big)- \log\Tr (e^{-\beta d\Gamma(h_c)}),
$$
so that
$$
S(\varrho_m)\geq \calF(\varrho_m,\varrho_c)-C\beta - \log\Tr (e^{-\beta d\Gamma(h_c)}).
$$
According to \cite[Theorem 2]{LS}, the relative entropy is weakly lower semicontinuous, and therefore
\bee
\liminf_{m \to \infty} S(\varrho_m) &\geq& \calF(\varrho,\varrho_c)-C\beta - \log\Tr (e^{-\beta d\Gamma(h_c)})\\
&\geq & S(\varrho)+\beta \Tr\big( (d\Gamma(h_c))^{1/2} \varrho (d\Gamma(h_c))^{1/2} \big)-C\beta.
\eee
Sending $\beta$ to zero then yields the result.
\end{proof}
\begin{remark} Under the conditions of Lemma \ref{Sbelow}, we have in fact  that
$$
    S(\varrho) =\lim_{m \to \infty} S(\varrho_m).
    $$
Indeed, a direct adaptation of Lemma \ref{lem:comp} further shows that $\{\varrho_m\}_{m \in \NN}$ actually converges strongly to $\varrho$ in $\calJ_1$, and as a consequence that the eigenvalues $\{\rho_j^{(m)}\}_{j \in \NN}$ of $\varrho_m$ converge to those of $\varrho$, denoted $\{\rho_j\}_{j \in \NN}$, as $m \to \infty$. Fatou's lemma for sequences then yields
$$
\sum_{j\in \NN} - \rho_j \log \rho_j \leq  \liminf_{m \to \infty} \sum_{j\in \NN} - \rho^{(m)}_j \log \rho_j^{(m)},
$$
which corresponds to
$$
    S(\varrho) \geq \limsup_{m \to \infty} S(\varrho_m).
$$
\end{remark}
\subsection{Minimizing sequences} \label{secseq} The starting point is that $S$ is bounded below on $\calA(n_0,u_0,e_0)$. Indeed, estimate \fref{below2} shows that states in $\calA(n_0,u_0,e_0)$ satisfy the assumptions of Lemma \ref{Sbelow}, and as a consequence, using again \fref{below2} together with \fref{eqeneg},
$$
S(\varrho) \geq - \gamma^{-1}\Tr\big( \HH^{1/2} \varrho\, \HH^{1/2} \big) - \log \Tr (e^{-d\Gamma(h_c)})=-\gamma^{-1}\|e_0\|_{L^1}- \log \Tr (e^{-d\Gamma(h_c)}),$$
for all $\varrho \in \calA(n_0,u_0,e_0)$. There exists then a minimizing sequence $\{\varrho_m\}_{m\in \NN}$ in $\calA(n_0,u_0,e_0)$ such that
$$
\lim_{m \to \infty}S(\varrho_m)=  \inf_{\calA(n_0,u_0,e_0)} S.
$$

We have the following compactness result. 

 \begin{lemma} \label{lem:comp}
   Let $\{\varrho_m\}_{m \in \NN}$ be a sequence in $\calE$ with
   \be
   \label{boundlem}
   \Tr\left( \HH^{1/2} \varrho_m \,\HH^{1/2} \right) \leq C,
   \ee for some $C$ independent of $m$. Then, there exists $\varrho^\star \in \calE$ and a subsequence $\{\varrho_{m_j}\}_{j\in \NN}$ that converges to $\varrho^\star$ strongly in $\calJ_1$, and such that
\begin{align*}    
\mathcal{N}^{1/2}\varrho_{m_j}\mathcal{N}^{1/2} &\underset{j\to+\infty}\to \mathcal{N}^{1/2}\varrho^\star \mathcal{N}^{1/2},\quad \textrm{weak-$*$}\textrm{ in }\mathcal{J}_1,
\\ \HH^{1/2}\varrho_{m_j}\HH^{1/2}&\underset{j\to+\infty}\to \HH^{1/2}\varrho^\star \HH^{1/2},\quad \textrm{weak-$*$}\textrm{ in }\mathcal{J}_1,
\end{align*}
with
\bea \label{inflim}
\left\{\begin{array}{ll}
\ds \Tr\left( \calN^{1/2} \varrho^\star \,\calN^{1/2} \right)\leq \liminf_{j \to \infty} \Tr\left( \calN^{1/2} \varrho_{m_j} \,\calN^{1/2} \right),
\\ \ds \Tr\left( \HH^{1/2} \varrho^\star \,\HH^{1/2} \right)\leq \liminf_{j \to \infty} \Tr\left( \HH^{1/2} \varrho_{m_j} \,\HH^{1/2} \right).
\end{array}\right.
\eea
\end{lemma}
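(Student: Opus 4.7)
The bound \eqref{boundlem} together with the operator inequality \eqref{below2} yields the uniform estimates
$$
\Tr \varrho_m = 1, \quad \Tr(\calN^{1/2}\varrho_m\calN^{1/2}) \leq C/\gamma, \quad \Tr(d\Gamma(h_c)^{1/2}\varrho_m d\Gamma(h_c)^{1/2}) \leq C/\gamma, \quad \Tr(\HH^{1/2}\varrho_m\HH^{1/2}) \leq C.
$$
Since $\calJ_1$ is the dual of the space of compact operators on $\mathfrak{F}_{b/f}$, Banach--Alaoglu together with a diagonal extraction would produce a subsequence $\{\varrho_{m_j}\}$ along which $\varrho_{m_j} \to \varrho^\star$, $\calN^{1/2}\varrho_{m_j}\calN^{1/2} \to \Sigma_1$, and $\HH^{1/2}\varrho_{m_j}\HH^{1/2} \to \Sigma_2$ weak-$*$ in $\calJ_1$, with $\varrho^\star,\Sigma_1,\Sigma_2\geq 0$ (weak-$*$ limits preserve positivity). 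The inequalities \eqref{inflim} are then the standard weak-$*$ lower semicontinuity of the trace for nonnegative operators in $\calJ_1$, obtained by testing $\Sigma_1$ and $\Sigma_2$ against finite-rank orthogonal projections approximating $\mathrm{Id}$.

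Identifying $\Sigma_1 = \calN^{1/2}\varrho^\star\calN^{1/2}$ and $\Sigma_2 = \HH^{1/2}\varrho^\star\HH^{1/2}$ is a separate step. The plan here is to test against compact operators of the special form $K = (\HH+\mathrm{Id})^{-1/2} L (\HH+\mathrm{Id})^{-1/2}$ with $L$ an arbitrary compact operator on $\mathfrak{F}_{b/f}$ (and the analogous family built from $(\calN+\mathrm{Id})^{-1/2}$). Since $(\HH+\mathrm{Id})^{1/2}\varrho_m(\HH+\mathrm{Id})^{1/2}$ is uniformly bounded in $\calJ_1$, cyclicity of the trace gives $\Tr(L\varrho_m) = \Tr\bigl(K\,(\HH+\mathrm{Id})^{1/2}\varrho_m(\HH+\mathrm{Id})^{1/2}\bigr)$; passing to the limit on both sides and matching for all compact $L$ pins down the weak-$*$ limit of $(\HH+\mathrm{Id})^{1/2}\varrho_m(\HH+\mathrm{Id})^{1/2}$ as $(\HH+\mathrm{Id})^{1/2}\varrho^\star(\HH+\mathrm{Id})^{1/2}$, from which the identification of $\Sigma_2$ (and, analogously, $\Sigma_1$) follows.

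The main obstacle is promoting $\varrho_{m_j} \to \varrho^\star$ from weak-$*$ to trace-norm convergence. By assumption \eqref{GT} and the trace-class result of \cite{brat2} already invoked in Section~\ref{secpropS}, the operator $e^{-\beta d\Gamma(h_c)}$ is trace class on $\mathfrak{F}_{b/f}$; hence $d\Gamma(h_c)$ has purely discrete spectrum with eigenvalues tending to $+\infty$, and the spectral projection $P_K := \mathbf{1}_{[0,K]}(d\Gamma(h_c))$ is finite rank, in particular compact. From the spectral bound $\mathrm{Id}-P_K \leq K^{-1}d\Gamma(h_c)$ and the uniform estimate on $\Tr(d\Gamma(h_c)^{1/2}\varrho_m d\Gamma(h_c)^{1/2})$ one obtains
$$
\Tr((\mathrm{Id}-P_K)\varrho_m) \leq \frac{C}{\gamma K}, \qquad \text{so that} \qquad \Tr(P_K\varrho_m) \geq 1-\frac{C}{\gamma K},
$$
uniformly in $m$. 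Weak-$*$ convergence of $\varrho_{m_j}$ against the compact operator $P_K$ then gives $\Tr\varrho^\star \geq \Tr(P_K\varrho^\star) \geq 1 - C/(\gamma K)$ for every $K$, so $\Tr \varrho^\star = 1$. Combined with weak-$*$ convergence and nonnegativity, Gr\"umm's convergence theorem (convergence of trace plus weak-$*$ convergence of nonnegative trace class operators implies trace-norm convergence) delivers the strong convergence $\varrho_{m_j}\to\varrho^\star$ in $\calJ_1$. Finally, $\Tr\varrho^\star = 1$ makes $\varrho^\star$ a state, and the lsc bound from \eqref{inflim} gives $\Tr(\HH^{1/2}\varrho^\star\HH^{1/2}) \leq C < \infty$, so $\varrho^\star \in \calE$. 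I expect the identification step to be the most delicate point, since unbounded operators are involved and the bounded functional calculus must be used carefully, but it is a purely algebraic manipulation once the uniform energy bound is in hand; the genuinely essential input is the tightness provided by the confining potential $v_c$.
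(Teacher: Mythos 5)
Your proposal is correct, and its overall architecture (weak-$*$ extraction in $\calJ_1$, identification of the limits of $\calN^{1/2}\varrho_{m_j}\calN^{1/2}$ and $\HH^{1/2}\varrho_{m_j}\HH^{1/2}$ by cyclicity of the trace against operators sandwiched by bounded functions of $\HH$, lower semicontinuity of the trace for the inequalities, and Gr\"umm's theorem to upgrade to trace-norm convergence once $\Tr\varrho^\star=1$ is known) coincides with the paper's. The one genuinely different ingredient is how you rule out loss of mass. The paper proves that $(\mathrm{Id}+\HH)^{-1}$ is itself compact --- each $H_n$ has compact resolvent as a perturbation of $H_n^c$, and $\|(\mathrm{Id}+H_n)^{-1}\|\le (1+\gamma n)^{-1}\to 0$ by \fref{Hbelow} --- and then obtains $\Tr\varrho_{m_j}\to\Tr\varrho^\star$ by testing the weak-$*$ limit of $(\mathrm{Id}+\HH)^{1/2}\varrho_{m_j}(\mathrm{Id}+\HH)^{1/2}$ against this compact resolvent. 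You instead run a tightness argument: the finite-rank spectral projections $P_K=\mathbf{1}_{[0,K]}(d\Gamma(h_c))$ (finite rank because $e^{-\beta d\Gamma(h_c)}$ is trace class by \fref{GT}) together with the uniform bound on $\Tr\bigl(d\Gamma(h_c)^{1/2}\varrho_m\,d\Gamma(h_c)^{1/2}\bigr)$ from \fref{below2} give $\Tr(P_K\varrho_m)\ge 1-C/(\gamma K)$ uniformly, hence $\Tr\varrho^\star\ge 1$; the reverse inequality is the trivial lower semicontinuity bound you should state explicitly. Your route is slightly more economical for this lemma, since it only needs compactness of the resolvent of the \emph{free confined} Hamiltonian $d\Gamma(h_c)$, which comes for free from \fref{GT}, and avoids the perturbation argument for the interacting $H_n$. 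The paper's heavier step is not wasted, however: compactness of the resolvent of $\HH$ is reused later (in the proof of Lemma \ref{lemZ}) to diagonalize $\HH_{\alpha,\beta}$, so it has to be established somewhere in any case.
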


The proof of Lemma \ref{lem:comp} is classical and is given below for the reader's convenience.

\begin{proof} First of all, since $\varrho_m$ is a state, we have $\Tr (\varrho_m)=1$, which, together with \fref{boundlem}, and the fact that the space of trace class operators is the dual of the space of compact operators, implies that there exist $\varrho^\star \in \calJ_1$ and $\sigma \in \calJ_1$, and a subsequence such that $\varrho_{m_j}$ and $\HH^{1/2} \varrho_{m_j} \,\HH^{1/2}$ converge to $\varrho^\star$ and $\sigma$ in $\calJ_1$ weak-$*$ as $j \to \infty$, respectively. It is direct to identify $\sigma$: let $K$ compact in $\frakF_{b/f}$, and let $\BB=(\textrm{Id}+\HH)^{-1}$, which is bounded. Then:
\bee
\lim_{j \to \infty} \Tr (\HH^{1/2} \varrho_{m_j} \,\HH^{1/2} \BB K \BB)&=&\Tr (\sigma \BB K \BB)\\
&=& \lim _{j \to \infty} \Tr ( \varrho_{m_j} \,\HH^{1/2} \BB K \BB \HH^{1/2})\\
&=&\Tr (\varrho^\star \HH^{1/2}\BB K \BB \HH^{1/2}).
\eee
  In the last line, we used that $\HH^{1/2}\BB K \BB \HH^{1/2}$ is compact. This shows that $\sigma=\HH^{1/2} \varrho^\star \,\HH^{1/2}$. We proceed in the same way for the limit of $\calN^{1/2} \varrho_{m_j} \,\calN^{1/2}$.

  The limits in \fref{inflim} follow from the weak-$*$ convergence and the fact that $\Tr(\varrho)=\|\varrho\|_{\calJ_1}$ when $\varrho \geq 0$. 

Regarding the strong convergence to $\varrho^\star$ in $\calJ_1$, we claim first that $(\textrm{Id}+\HH)^{-1}$ is compact. Indeed, we have
$$
(\textrm{Id}+\HH)^{-1}=1 \oplus (\textrm{Id}_\frakh+h_0+V)^{-1}\oplus \bigoplus_{ n= 2}^{+\infty} (\textrm{Id}_{\frakF^{(n)}_{b/f}}+H_n)^{-1}.
$$
 
The operator $h_0+v_c$ has a compact resolvent according to \cite[Theorem XIII.47]{RS-80-4}, and so does $H_n^c$. Then, for $n \geq 2$, each of the $H_n$ have a compact resolvent as perturbations of $H_n^c$ as an application of \cite[Theorem 3.4, Chapter 6 \S 3]{kato}. To obtain that $(\textrm{Id}+\HH)^{-1}$ is compact, it just remains to show that $\|(\textrm{Id}_{\frakF^{(n)}_{b/f}}+H_n)^{-1}\|_{\calL(\frakF^{(n)}_{b/f})} \to 0$ as $n \to \infty$, see \cite[Theorem 4.1]{arai}. This is a consequence of \fref{Hbelow}, that yields
$$
\|(\textrm{Id}_{\frakF^{(n)}_{b/f}}+H_n)^{-1}\|_{\calL(\frakF^{(n)}_{b/f})} \leq (1+\gamma n)^{-1}.
$$
Second of all, it is not difficult to establish that the weak-$*$ convergence of $\varrho_{m_j}$ and $\HH^{1/2} \varrho_{m_j} \,\HH^{1/2}$ imply the weak-$*$ convergence of $(\textrm{Id}+\HH)^{1/2} \varrho_{m_j} \,(\textrm{Id}+\HH)^{1/2}$ to $(\textrm{Id}+\HH)^{1/2} \varrho^\star \,(\textrm{Id}+\HH)^{1/2}$. Then,

\bee
\lim_{j \to \infty} \Tr (\varrho_{m_j})&=&\Tr \Big( (\textrm{Id}+\HH)^{1/2} \varrho_{m_j} \,(\textrm{Id}+\HH)^{1/2} (\rm{Id}+\HH)^{-1} \Big)\\
&=& \Tr \Big( (\textrm{Id}+\HH)^{1/2} \varrho^\star \,(\textrm{Id}+\HH)^{1/2} (\rm{Id}+\HH)^{-1}\Big) \\
&=&\Tr (\varrho^\star).
\eee
Finally, according to \cite[Theorem 2.21, Addendum H]{Simon-trace}, weak convergence in sense of operators together with the convergence of the norm in $\calJ_1$ implies strong convergence in $\calJ_1$. Since weak-$*$ convergence in $\calJ_1$ implies weak convergence in the sense of operators, we obtain that $\varrho_{m_j}$ converges strongly to $\varrho^\star$ in $\calJ_1$. In particular, $\varrho^\star$ is a state. This ends the proof.
\end{proof}

  \medskip
  
We now continue the study of minimizing sequences. Since $\varrho_m$ is in $\calA(n_0,u_0,e_0)$ and therefore satisfies the constraints, we have, for all $m \in \NN$,
  \begin{align}
    &\Tr\left( \calN^{1/2} \varrho_m \,\calN^{1/2} \right)=\|n[\varrho_m]\|_{L^1}=\|n_0\|_{L^1} \label{equa1}\\
      &\Tr\left( \HH^{1/2} \varrho_m \,\HH^{1/2} \right)=\|e[\varrho_m]\|_{L^1}=\|e_0\|_{L^1}. \label{equa2}
  \end{align}
  According to Lemma \ref{lem:comp}, there exists then a subsequence (that we still denote abusively by $\{\varrho_m\}_{m\in \NN}$) that converges in the weak-$*$ topology of $\mathcal{J}_1$ to a state $\varrho^\star\in\calE$. Since the continuity result given in Lemma \ref{contS} shows that
  \be \label{ineqSS}
S(\varrho^\star) \leq \lim_{m \to \infty}S(\varrho_m)=  \inf_{\calA(n_0,u_0,e_0)} S,
\ee
we are left to prove that $\varrho^{\star}\in\mathcal{A}(n_0,u_0,e_{0})$, i.e. $\varrho^\star$ verifies the local constraints. For this, we have from Lemma \ref{lem:comp},
\begin{align} \label{infn}
\|n[\varrho^\star]\|_{L^1} = \mathrm{Tr}\left(\mathcal{N}^{1/2}\varrho^\star\mathcal{N}^{1/2}\right) &\leq \underset{m\to+\infty}{\liminf} \; \mathrm{Tr}\left(\mathcal{N}^{1/2}\varrho_m\mathcal{N}^{1/2}\right) = \|n_0\|_{L^1}
\end{align}
and
\begin{align} \label{infe}
\|e[\varrho^\star]\|_{L^1} = \mathrm{Tr}\left(\mathbb{H}^{1/2}\varrho^\star\mathbb{H}^{1/2}\right) &\leq \underset{m\to+\infty}{\liminf} \; \mathrm{Tr}\left(\mathbb{H}^{1/2}\varrho_m\mathbb{H}^{1/2}\right) =\|e_0\|_{L^1}.
\end{align}

As already mentioned in the introduction, a one-body version of Lemma \ref{lem:comp} yields directly that $n[\varrho^\star]=n_0$. It is not true in the many-body case. The issue is the following,  and is related to the identification of the 1-particle density matrices. From \fref{equa1} and \fref{equa2}, it is possible to show that the sequence of 1-particle density matrices $\varrho_m^{(1)}$ converges to some $\sigma^{(1)}$ strongly in $\calJ_1(\frakh)$. In particular, the local density of $\sigma^{(1)}$ is $n_0$. The difficulty is to identify $\sigma^{(1)}$ with the 1-particle density matrix of $\varrho^\star$, which is not possible at that stage. Indeed, we have, for all $\varphi \in L^\infty(\Rm^d)$,
\bee
\int_{\mathbb{R}^d} n[\varrho_m](x) \varphi(x) dx &=& \mathrm{Tr}_{\mathfrak{h}}\left(\varrho_m^{(1)} \varphi\right)\\
&=&\mathrm{Tr}\left(\calN^{1/2}\varrho_m \calN^{1/2}  \calN^{-1} d\Gamma(\varphi)\right).
\eee
The operator $\calN^{-1} d\Gamma(\varphi)$ is bounded in $\frakF_{b/f}$, but not compact, which does not allow us to pass to the limit above since $\calN^{1/2}\varrho_m \calN^{1/2}$ converges only weak-$*$ in $\calJ_1$. One could replace $\calN$ by $\HH$ above, but while the projections of $ \HH^{-1} d\Gamma(\varphi)$ on each $\frakF^{(n)}_{b/f}$ are compact, $ \HH^{-1} d\Gamma(\varphi)$ is not compact since these projections do not tend to zero in $\calL(\frakF^{(n)}_{b/f})$ as $n \to \infty$.\\


We have then the following proposition, proved in Section \ref{propproof1}:

\begin{proposition} \label{propegal} Assume that $\|n[\varrho^\star]\|_{L^1}=\|n_0\|_{L^1}$ and that $\|e[\varrho^\star]\|_{L^1}=\|e_0\|_{L^1}$. Then $\varrho^\star \in \mathcal{A}(n_0,u_0,e_{0})$.
  \end{proposition}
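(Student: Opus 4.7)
The plan is to upgrade the weak-$*$ convergences of Lemma~\ref{lem:comp} to strong $\calJ_1$-convergence via the matching of norms, and then pass to the limit in the local definitions of $n$, $u$, $e$ by pairing one- and two-particle density matrices against suitably bounded test operators. The identities $\|n[\varrho]\|_{L^1}=\Tr(\calN^{1/2}\varrho\calN^{1/2})$ and $\|e[\varrho]\|_{L^1}=\Tr(\HH^{1/2}\varrho\HH^{1/2})$ turn the two hypotheses of the proposition into matchings of $\calJ_1$-norms along the minimizing sequence. Combined with the weak-$*$ convergences of $\calN^{1/2}\varrho_m\calN^{1/2}$ and $\HH^{1/2}\varrho_m\HH^{1/2}$ and the nonnegativity of these operators, the principle ``weak-$*$ + norm $\Rightarrow$ strong'' (already invoked at the end of the proof of Lemma~\ref{lem:comp}) upgrades both to strong convergence in $\calJ_1$.

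Strong convergence of $\calN^{1/2}\varrho_m\calN^{1/2}$ directly yields $n[\varrho^\star]=n_0$: for $\varphi\in C_c^\infty(\Rm^d)$, the operator $\BB_\varphi:=0\oplus\bigoplus_{n\geq 1}n^{-1}d\Gamma(\varphi)_{(n)}$ is bounded on $\frakF_{b/f}$ with norm at most $\|\varphi\|_\infty$, so $\int n[\varrho_m]\varphi\,dx=\Tr(\BB_\varphi\calN^{1/2}\varrho_m\calN^{1/2})$ passes to the limit and forces $\int(n[\varrho^\star]-n_0)\varphi\,dx=0$ for all such $\varphi$. To reach the remaining local constraints, I would decompose the total energy as $\|e[\varrho]\|_{L^1}=\|k[\varrho]\|_{L^1}+\int V n[\varrho]\,dx+\|e_I[\varrho]\|_{L^1}$, which together with $n[\varrho^\star]=n_0$ collapses the hypothesis on $e$ into $\|k[\varrho^\star]\|_{L^1}+\|e_I[\varrho^\star]\|_{L^1}=\|k_0\|_{L^1}+\|e_{I,0}\|_{L^1}$. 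The sequences $\HH_0^{1/2}\varrho_m\HH_0^{1/2}$ and $(\WW+r_0\calN)^{1/2}\varrho_m(\WW+r_0\calN)^{1/2}$ are bounded in $\calJ_1$ by \fref{Hbelow}; identifying their weak-$*$ limits with the corresponding operators at $\varrho^\star$ yields the lower semi-continuity inequalities $\|k[\varrho^\star]\|_{L^1}\leq\|k_0\|_{L^1}$ and $\|e_I[\varrho^\star]\|_{L^1}\leq\|e_{I,0}\|_{L^1}$. Together with the sum equality above, both inequalities must be equalities, and a second application of ``weak-$*$ + norm $\Rightarrow$ strong'' delivers strong $\calJ_1$-convergence of these two weighted sequences as well.

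With these additional strong convergences, the local identities $u[\varrho^\star]=u_0$, $k[\varrho^\star]=k_0$, and $e_I[\varrho^\star]=e_{I,0}$ follow by testing against $C_c^\infty$ functions. The key point is that for each differential test operator involved---$A_\Phi=-i(\Phi\cdot\nabla+\tfrac12\nabla\cdot\Phi)$ for the current, $-\nabla\cdot(\varphi\nabla)$ for the kinetic energy, and the 2-particle multiplication operator $w(x-y)\varphi(x)+\textrm{symm.}$ for $e_I$---the symmetric conjugation by $h_0^{-1/2}$ (resp.\ by $(\WW+r_0\calN)^{-1/2}$ lifted to $\frakF^{(2)}_{b/f}$) is bounded on the relevant space. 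A cyclic rearrangement of the trace then reduces each pairing to a bounded operator acting on a strongly-$\calJ_1$-convergent sequence, producing the desired pointwise equality. Combined with $n[\varrho^\star]=n_0$ and $e_P[\varrho]=V n[\varrho]$, this yields $e[\varrho^\star]=e_0$ and $u[\varrho^\star]=u_0$, whence $\varrho^\star\in\calA(n_0,u_0,e_0)$.

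The main obstacle lies in the middle paragraph: identifying the weak-$*$ limits of $\HH_0^{1/2}\varrho_m\HH_0^{1/2}$ and $(\WW+r_0\calN)^{1/2}\varrho_m(\WW+r_0\calN)^{1/2}$ as the corresponding operators evaluated at $\varrho^\star$. Since neither $\HH_0$ nor $\WW+r_0\calN$ dominates $\HH$, the direct duality argument used in Lemma~\ref{lem:comp} is not immediately available, and the identification must leverage the already-established strong convergence of $\HH^{1/2}\varrho_m\HH^{1/2}$ by testing against compact operators of the form $(\mathrm{Id}+\HH)^{-1/2}K(\mathrm{Id}+\HH)^{-1/2}$, with $K$ compact, and using both the cyclicity of the trace and the boundedness of $(\mathrm{Id}+\HH)^{-1/2}\HH_0^{1/2}$ (resp.\ $(\mathrm{Id}+\HH)^{-1/2}(\WW+r_0\calN)^{1/2}$) provided by \fref{Hbelow}.
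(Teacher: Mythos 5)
Your proposal is correct and rests on the same two pillars as the paper's proof: the matching of the $\calJ_1$-norms upgrades the weak-$*$ convergences of Lemma \ref{lem:comp} to strong $\calJ_1$-convergence via \cite[Theorem 2.21, Addendum H]{Simon-trace}, and the local constraints are then recovered by pairing against test operators made bounded by conjugation with inverse powers of the relevant Hamiltonians. The differences are organizational. The paper descends to the one-particle level: from the strong convergence of $\calN^{1/2}\varrho_m\calN^{1/2}$ and $\HH^{1/2}\varrho_m\HH^{1/2}$ it deduces that $\varrho_m^{(1)}$ and $(h_0+\mathrm{Id}_{\mathfrak{h}})^{1/2}\varrho_m^{(1)}(h_0+\mathrm{Id}_{\mathfrak{h}})^{1/2}$ converge strongly in $\calJ_1(\frakh)$ to the corresponding objects for $\varrho^\star$, and then tests $n$, $u$, $k$ against bounded operators such as $(h_0+\mathrm{Id}_{\mathfrak{h}})^{-1/2}\nabla\cdot(\varphi\nabla)(h_0+\mathrm{Id}_{\mathfrak{h}})^{-1/2}$; you stay on Fock space, which is equivalent. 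Your middle paragraph, however, is a detour you do not need: since \fref{Hbelow} gives $\HH_0\leq C(\mathrm{Id}+\HH)$ and $\WW+r_0\calN\leq C(\mathrm{Id}+\HH)$, the operators $\HH_0^{1/2}(\mathrm{Id}+\HH)^{-1/2}$ and $(\WW+r_0\calN)^{1/2}(\mathrm{Id}+\HH)^{-1/2}$ are bounded, and sandwiching the sequence $(\mathrm{Id}+\HH)^{1/2}\varrho_m(\mathrm{Id}+\HH)^{1/2}$ (itself strongly convergent by the same norm-matching argument, its trace being $1+\|e_0\|_{L^1}$) between them yields the strong $\calJ_1$-convergence of $\HH_0^{1/2}\varrho_m\HH_0^{1/2}$ and $(\WW+r_0\calN)^{1/2}\varrho_m(\WW+r_0\calN)^{1/2}$ outright --- this is exactly the tool you invoke in your final paragraph, so the ``main obstacle'' you flag resolves itself and the lower-semicontinuity-plus-sum-equality step can be deleted. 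Deleting it also repairs a small abuse in your write-up: there are no data $k_0$ and $e_{I,0}$, since only the total $e_0$ is constrained and different members of the minimizing sequence may split it differently between kinetic and interaction parts (the paper's own proof commits the same abuse when it writes $k[\varrho_m]=k_0$); with the direct sandwiching you simply obtain $k[\varrho_m]\to k[\varrho^\star]$ and $e_I[\varrho_m]\to e_I[\varrho^\star]$ in $L^1(\Rm^d)$, hence $e[\varrho^\star]=\lim_{m\to\infty}\big(k[\varrho_m]+Vn_0+e_I[\varrho_m]\big)=e_0$.
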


  Based on this last result, the main difficulty is therefore to prove that $\|n[\varrho^\star]\|_{L^1}<\|n_0\|_{L^1}$ and $\|e[\varrho^\star]\|_{L^1}<\|e_0\|_{L^1}$ is not possible. We will use for this the next lemma, where $\calA_g(a)$ and $\calA_{g,e}(a)$ are defined in \fref{Ag} and \fref{Age}.

\begin{lemma} \label{comS}(i) Suppose that the problem
  $$
  \inf_{\calA_g(a)} F_\beta
  $$
  admits a solution. Then,
  \be \label{eq1}
  \inf_{\calA_g(a)} F_\beta=\inf_{
    \scriptsize{\left\{\begin{array}{l}
                         (n,u,e) \in \calM\\
  n \in L^1_+ \textrm{with} \; \|n\|_{L^1}=a  \\
  u \in (L^1)^d, e \in L^1_+
\end{array} \right.}
  } \inf_{\calA(n,u,e)} F_\beta.
  \ee

  (ii) Suppose that the problem
  $$
  \inf_{\calA_{g,e}(a)} S
  $$
  admits a solution. Then,
  $$
  \inf_{\calA_{g,e}(a)} S=\inf_{
    \scriptsize{\left\{\begin{array}{l}
                         (n,u,e) \in \calM\\
  e \in L^1_+ \textrm{with} \; \|e\|_{L^1}=a  \\
  u \in (L^1)^d, n \in L^1_+
\end{array} \right.}
  } \inf_{\calA(n,u,e)} S.
  $$
\end{lemma}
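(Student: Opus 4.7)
\medskip

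The plan is to prove the identity by an elementary set-theoretic partition, slicing the global-density feasible set $\calA_g(a)$ into the local-moment level sets $\calA(n,u,e)$. No compactness or variational input is needed beyond the very definitions of $\calM$, $\calA_g(a)$, and $\calA(n,u,e)$.

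The substantive step is the set identity
\[
\calA_g(a) \;=\; \bigcup_{\substack{(n,u,e)\in\calM\\ \|n\|_{L^1}=a}} \calA(n,u,e).
\]
The inclusion ``$\supseteq$'' is immediate, since any $\varrho\in\calA(n,u,e)$ with $\|n\|_{L^1}=a$ lies in $\calE$ and satisfies $\|n[\varrho]\|_{L^1}=\|n\|_{L^1}=a$, so $\varrho\in\calA_g(a)$. For ``$\subseteq$'', given $\varrho\in\calA_g(a)\subset\calE$, I form the triple $(n[\varrho],u[\varrho],e[\varrho])$; by the definition of $\calM$ this triple lies in $\calM$, its density has $L^1$-norm equal to $a$, and trivially $\varrho\in\calA(n[\varrho],u[\varrho],e[\varrho])$.

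Taking the infimum of $F_\beta$ over both sides of the set identity then yields \fref{eq1}. The hypothesis that $\inf_{\calA_g(a)} F_\beta$ admits a minimizer plays no role in the equality of the two infima, which is purely set-theoretic; it is included because the lemma is applied downstream to derive the monotonicity inequalities \fref{inimp}, where existence of a minimizer is essential. Part (ii) is proved in exactly the same way, with $F_\beta$ replaced by $S$ and $\calA_g(a)$ replaced by the global-energy set $\calA_{g,e}(a)$; the partition is now indexed by triples $(n,u,e)\in\calM$ with $\|e\|_{L^1}=a$.

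There is essentially no obstacle here: the content is bookkeeping. The only point worth verifying with care is that for every $\varrho\in\calA_g(a)$ (respectively $\calA_{g,e}(a)$) the moments $(n[\varrho],u[\varrho],e[\varrho])$ are well defined and form an admissible triple, but this is immediate from the inclusion $\varrho\in\calE$ and the very definition of $\calM$.
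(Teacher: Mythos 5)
Your argument is correct, and it differs from the paper's in a small but genuine way. Both proofs obtain the inequality $\inf_{\calA_g(a)} F_\beta\leq \inf\inf F_\beta$ identically: any $\varrho\in\calA(n,u,e)$ with $\|n\|_{L^1}=a$ lies in $\calA_g(a)$. The difference is in the reverse inequality. The paper takes the minimizer $\sigma$ of $F_\beta$ on $\calA_g(a)$ (whose existence is the hypothesis of the lemma), notes that $(n[\sigma],u[\sigma],e[\sigma])$ is an admissible triple with $\|n[\sigma]\|_{L^1}=a$, and closes the chain
$$
\inf G\;\leq\; G(n[\sigma],u[\sigma],e[\sigma])\;=\;\inf_{\calA(n[\sigma],u[\sigma],e[\sigma])}F_\beta\;\leq\; F_\beta(\sigma)\;=\;\min_{\calA_g(a)}F_\beta .
$$
You instead observe the covering identity $\calA_g(a)=\bigcup\calA(n,u,e)$ (union over admissible triples with $\|n\|_{L^1}=a$) and use that the infimum over a union is the infimum of the infima; equivalently, for each $\varrho\in\calA_g(a)$ one has $\varrho\in\calA(n[\varrho],u[\varrho],e[\varrho])$, so the right-hand side is $\leq F_\beta(\varrho)$, and one takes the infimum over $\varrho$. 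This is purely set-theoretic and, as you correctly point out, does not use the existence of a minimizer; your version of the lemma is therefore slightly stronger (the hypothesis could be dropped from the statement of the equality, though it is of course essential where the lemma is applied, since the monotonicity argument in Propositions \ref{gminS} and \ref{gminF} is run on the actual Gibbs-state minimizers). The only thing the paper's formulation buys in exchange is the explicit observation that the outer infimum on the right of \fref{eq1} is attained at the moments of the global minimizer, but this extra piece of information is not used downstream. Your verification that $(n[\varrho],u[\varrho],e[\varrho])\in\calM$ for $\varrho\in\calA_g(a)\subset\calE$ is exactly the point that makes the covering identity work, and it is handled correctly.
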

\begin{proof} We start with (i). Let $(n,u,e) \in \calM$ and $\varrho \in \calA(n,u,e)$ with $\|n\|_{L^1}=a$. Then $\varrho \in \calA_g(a)$, and as a consequence
  $$
  \inf_{\calA_g(a)} F_\beta \leq  \inf_{\calA(n,u,e)} F_\beta.
  $$
Taking the infimum as in \fref{eq1} then yields a first inequality in \fref{eq1}. For the reverse inequality, denote by $\sigma$ a minimizer of $F_\beta$ in $\calA_g(a)$ and let
$$
G(n,u,e)=\inf_{\calA(n,u,e)} F_\beta.
$$
By construction $\|n[\sigma]\|_{L^1}=a$, $u[\sigma] \in (L^1(\Rm^d))^d$ and $e[\sigma] \in L^1_+(\Rm^d)$, and clearly $(n[\sigma],u[\sigma],e[\sigma])$ is admissible. Hence,
$$
\inf_{
  \scriptsize{\left\{\begin{array}{l}
                       (n,u,e) \in \calM\\
  n \in L^1_+ \textrm{with} \; \|n\|_{L^1}=a  \\
  u \in (L^1)^d, e \in L^1_+
\end{array} \right.}
}G(n,u,e)
\leq G(n[\sigma],u[\sigma],e[\sigma]).
$$
It remains to prove that
$$
G(n[\sigma],u[\sigma],e[\sigma])=\min_{\calA_g(a)} F_\beta,
$$
which is straightforward since
$$
  \min_{\calA_g(a)} F_\beta \leq  G(n[\sigma],u[\sigma],e[\sigma])=\inf_{\calA(n[\sigma],u[\sigma],e[\sigma])} F_\beta \leq F_\beta(\sigma)=\min_{\calA_g(a)} F_\beta.
  $$
  This proves (i). Item (ii) follows in the same manner by replacing $n$ by $e$ and $F_\beta$ by $S$. This ends the proof.
\end{proof}

\bigskip

Let now $\|e[\varrho^\star]\|_{L^1}=b^\star$ and $\|e_0\|_{L^1}=b$. Since $\varrho^\star \in \calE$, we have
 $$
\inf_{\calA_{g,e}(b^\star)} S \leq S(\varrho^\star).
$$
Together with \fref{ineqSS}, this gives
$$
\inf_{\calA_{g,e}(b^\star)} S \leq \inf_{\calA(n_0,u_0,e_0)} S.
$$
Assuming for the moment that $S$ admits a minimizer on $\calA_{g,e}(b)$, item (ii) of Lemma \ref{comS} yields
$$
\inf_{\calA_{g,e}(b^\star)} S \leq 
\inf_{
  \scriptsize{\left\{\begin{array}{l}
                       (n,u,e) \in \calM\\
  e \in L^1_+ \textrm{with} \; \|e\|_{L^1}=\|e_0\|_{L^1}  \\
  u \in (L^1)^d, n \in L^1_+
\end{array} \right.}
  } \inf_{\calA(n,u,e)} S=\inf_{\calA_{g,e}(b)} S,
$$
which results in
\be \label{cont1}
\inf_{\calA_{g,e}(b^\star)} S \leq \inf_{\calA_{g,e}(b)} S.
\ee

The next result shows that there is a contradiction above if $b^\star<b$.

\begin{proposition} \label{gminS}
  Let $a>0$. Then, the minimization problem

  $$
  \inf_{\calA_{g,e}(a)} S
  $$
  admits a unique solution. Let moreover $f(a)=\inf_{\calA_{g,e}(a)} S$. Then, $f$ is a strictly decreasing continuous function on $\Rm_+$.
\end{proposition}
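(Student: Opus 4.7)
The plan is to exhibit the minimizer explicitly as a global Gibbs state of the form
$$\varrho_\beta := \frac{e^{-\beta \HH}}{Z(\beta)}, \qquad Z(\beta) := \Tr(e^{-\beta \HH}),$$
for a suitable $\beta=\beta(a)>0$. First I would verify that $\varrho_\beta \in \calE$ for every $\beta>0$. The estimate \fref{Hbelow} gives the operator inequality $\HH \geq \gamma\, d\Gamma(h_c)$, and since $e^{-\beta\gamma d\Gamma(h_c)}$ is trace class on $\frakF_{b/f}$ by \fref{GT} and \cite[Props.~5.2.22, 5.2.27]{brat2}, the min--max principle yields $Z(\beta)\leq \Tr e^{-\beta\gamma d\Gamma(h_c)}<\infty$. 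A similar spectral comparison shows that $\Tr(\HH^{1/2}\varrho_\beta \HH^{1/2})$ is finite, hence $\varrho_\beta \in \calE$.

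I would then introduce $E(\beta):=\Tr(\HH^{1/2}\varrho_\beta \HH^{1/2})=-(\log Z)'(\beta)$ and show that $E$ is a continuous strictly decreasing bijection from $(0,\infty)$ onto $(0,\infty)$. Strict convexity of $\beta\mapsto \log Z(\beta)$, which follows because $\HH$ is not a scalar on any spectral subspace supporting $\varrho_\beta$, gives strict monotonicity; the limit $E(\beta)\to 0$ as $\beta\to +\infty$ comes from $\varrho_\beta$ concentrating on the vacuum (the zero eigenvalue of $\HH$, living in the sector $\Cm$), while $E(\beta)\to +\infty$ as $\beta\to 0^+$ follows from the compactness of $(\mathrm{Id}+\HH)^{-1}$ established in the proof of Lemma \ref{lem:comp}: arbitrarily many high-energy eigenstates contribute with comparable weight as $\beta\to 0$. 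The intermediate value theorem then produces a unique $\beta(a)>0$ with $E(\beta(a))=a$. Uniqueness of the minimizer and the identity $f(a)=S(\varrho_{\beta(a)})$ follow from the Gibbs variational principle: for any $\varrho\in\calA_{g,e}(a)$,
$$0 \leq \calF(\varrho,\varrho_{\beta(a)}) = S(\varrho) + \beta(a)\, a + \log Z(\beta(a)),$$
so $S(\varrho)\geq -\beta(a)\,a - \log Z(\beta(a)) = S(\varrho_{\beta(a)})$, with equality if and only if $\varrho = \varrho_{\beta(a)}$.

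With the explicit formula $f(a)=-\beta(a)\,a - \log Z(\beta(a))$ in hand, strict monotonicity and continuity of $f$ reduce to short calculations. Differentiating in $a$ and using $-(\log Z)'(\beta(a))=a$, the envelope identity gives $f'(a)=-\beta(a)<0$, so $f$ is $C^1$ and strictly decreasing on $\Rm_+$. Equivalently, for $a_1<a_2$ and $\beta_i=\beta(a_i)$, the map $\beta\mapsto -\beta a_2 - \log Z(\beta)$ is maximized at $\beta_2$, so $-\beta_1 a_2 - \log Z(\beta_1) \leq f(a_2)$, and adding $\beta_1(a_2-a_1)>0$ yields $f(a_1)>f(a_2)$. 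Continuity of $f$ follows from continuity of $\beta(\cdot)$, itself the inverse of the continuous strictly decreasing $E$.

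The hard part is the rigorous analysis of $\beta\mapsto \log Z(\beta)$ in the many-body Fock setting: identifying $(\log Z)'(\beta)$ with $-E(\beta)$ requires justifying differentiation under the trace when $\HH$ is unbounded across the sectors $\frakF^{(n)}_{b/f}$, and the asymptotics $E(\beta)\to+\infty$ as $\beta\to 0^+$ calls for a careful lower bound that exploits the growth of the sector Hamiltonians $H_n$. These are technical but reduce to spectral and dominated-convergence arguments once the sector decomposition of $\HH$ and the compactness of $(\mathrm{Id}+\HH)^{-1}$ are used.
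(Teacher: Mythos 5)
Your proposal is correct and follows essentially the same route as the paper: the paper's proof takes the $\alpha=0$ slice of its two-parameter Gibbs family $\varrho_{\alpha,\beta}=e^{-\beta\HH-\alpha\calN}/Z_{\alpha,\beta}$, establishes in Lemma \ref{lemZ} exactly the monotonicity and limiting behavior of $E(0,\beta)=-\partial_\beta\log Z_{0,\beta}$ that you describe, solves $E(0,\beta_0(a))=a$, identifies the unique minimizer via the nonnegativity of the relative entropy $\calF(\varrho,\varrho_{0,\beta_0(a)})$, and obtains $f'(a)=-\beta_0(a)<0$ by the same envelope computation.
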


The proof of Proposition \ref{gminS} is given in Section \ref{propproof2}. Suppose that $\|e[\varrho^\star]\|_{L^1}=b^\star<b=\|e_0\|_{L^1}$. Based on the previous proposition, we have $f(b)<f(b^\star)$, which contradicts \fref{cont1}, and  we obtain therefore the equality
\be \label{eqE}\|e[\varrho^\star]\|_{L^1}=\|e_0\|_{L^1}.
\ee

It remains to prove that
\be \label{eqn}\|n[\varrho^\star]\|_{L^1}=\|n_0\|_{L^1}.
\ee
For this, let $\beta>0$. From \fref{ineqSS} and \fref{eqE}, we find
\be \label{ineqN2}
F_\beta(\varrho^\star)=\beta^{-1} S(\varrho^\star)+ \|e[\varrho^\star]\|_{L^1} \leq \beta^{-1} \inf_{\calA(n_0,u_0,e_0)} S+  \|e_0\|_{L^1}
=\inf_{\calA(n_0,u_0,e_0)} F_\beta .\ee

The next proposition is similar to Propositon \ref{gminS} and is proved in Section \ref{propproof3}.

\begin{proposition} \label{gminF}
  Let $a_0>0$. Then, there exists $\beta_0(a_0)>0$, such that the minimization problem
$$
  \inf_{\calA_{g}(a)} F_\beta
  $$
  admits a unique solution for any $\beta \leq \beta_0(a)$ and any $a \leq a_0$. Let moreover $g(a)=\inf_{\calA_{g}(a)} F_\beta$. Then, $g$ is a strictly decreasing continuous function on $(0,a_0]$.
\end{proposition}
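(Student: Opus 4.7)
The approach parallels that of Proposition~\ref{gminS}: I would realize the minimizer as a global Gibbs state of the form $\varrho_{\alpha,\beta} = e^{-(\beta\HH + \alpha\calN)}/Z_{\alpha,\beta}$, using $\alpha$ as a Lagrange multiplier conjugate to the density constraint, and then read off existence, uniqueness, monotonicity, and continuity from standard thermodynamic identities. The first step is to show that $F_\beta$ is bounded below on $\calE$: applying Lemma~\ref{Sbelow} with $\tau := \beta\gamma/2$ in place of its free parameter and combining with \fref{below2} yields
$$
F_\beta(\varrho) \geq \tfrac{1}{2}\|e[\varrho]\|_{L^1} - \beta^{-1}\log\Tr\!\left(e^{-\beta\gamma\, d\Gamma(h_c)/2}\right),
$$
so $F_\beta$ is coercive with respect to $\|e[\varrho]\|_{L^1}$.

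Next, using \fref{Hbelow} one has $\beta\HH + \alpha\calN \geq \beta\gamma\, d\Gamma(h_c) + (\beta\gamma + \alpha)\calN$, so $e^{-(\beta\HH + \alpha\calN)}$ is trace class whenever $\alpha > -\beta\gamma$ and $\varrho_{\alpha,\beta}$ is then a well-defined state in $\calE\cap\calS_0$. The nonnegativity of the relative entropy $\calF(\varrho,\varrho_{\alpha,\beta})$ translates into the Gibbs variational inequality
$$
F_\beta(\varrho) + \beta^{-1}\alpha\|n[\varrho]\|_{L^1} \;\geq\; F_\beta(\varrho_{\alpha,\beta}) + \beta^{-1}\alpha\, N(\alpha,\beta),
$$
with equality iff $\varrho = \varrho_{\alpha,\beta}$, where $N(\alpha,\beta) := \|n[\varrho_{\alpha,\beta}]\|_{L^1}$. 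Hence, as soon as $\alpha$ can be tuned so that $N(\alpha,\beta) = a$, $\varrho_{\alpha,\beta}$ is automatically the unique minimizer of $F_\beta$ on $\calA_g(a)$, delivering existence and uniqueness at once.

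The crux of the proof is then to show that $\alpha\mapsto N(\alpha,\beta)$ is continuous, strictly decreasing, and surjects onto $(0,a_0]$ provided $\beta\le\beta_0(a_0)$. Strict monotonicity follows from the variance identity $\partial_\alpha N(\alpha,\beta) = -\bigl(\Tr(\calN^2\varrho_{\alpha,\beta}) - N(\alpha,\beta)^2\bigr) < 0$, while surjectivity onto $(0,+\infty)$ is obtained by sector-by-sector estimates on $Z_{\alpha,\beta}$ and $\Tr(\calN e^{-\beta\HH-\alpha\calN})$ based on the bounds from \fref{Hbelow}: $N(\alpha,\beta)\to 0$ as $\alpha\to+\infty$ (each $n$-particle sector is damped by $e^{-\alpha n}$), and $N(\alpha,\beta)\to+\infty$ at the left endpoint of the admissible range. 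Defining $\alpha(a,\beta)$ implicitly by $N(\alpha(a,\beta),\beta) = a$, a short computation starting from $\log Z_{\alpha,\beta}$ yields
$$
g(a) = -\beta^{-1}\alpha(a,\beta)\, a - \beta^{-1}\log Z_{\alpha(a,\beta),\beta}, \qquad g'(a) = -\beta^{-1}\alpha(a,\beta),
$$
so $g$ is $C^1$, hence continuous, on $(0,a_0]$ and strictly decreasing there precisely when $\alpha(a,\beta)>0$ throughout.

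The main obstacle lies in this last positivity requirement. At fixed $\beta>0$, denote by $a_c(\beta) := N(0,\beta)$ the average density of the canonical Gibbs state $e^{-\beta\HH}/\Tr(e^{-\beta\HH})$; by the monotonicity just established, $\alpha(a,\beta)>0$ iff $a<a_c(\beta)$, so the smallness assumption $\beta\le\beta_0(a_0)$ has to be tuned to guarantee $a_c(\beta)>a_0$. I would verify that $a_c(\beta)\to+\infty$ as $\beta\to 0^+$ using the integrability \fref{GT} combined with a sector-by-sector lower bound on $\Tr(\calN e^{-\beta\HH})$ extracted from \fref{Hbelow} (physically, at high temperature entropy forces the occupation of many-particle sectors regardless of chemical-potential bias), which furnishes the required $\beta_0(a_0)$ and closes the argument.
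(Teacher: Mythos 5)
Your proposal is correct and follows essentially the same route as the paper: the minimizer is identified as the grand-canonical Gibbs state $\varrho_{\alpha_0(a),\beta}$ via positivity of the relative entropy, uniqueness and strict monotonicity come from the strict decrease of $\alpha\mapsto N(\alpha,\beta)$ (a variance/Cauchy--Schwarz argument) together with $g'(a)=-\beta^{-1}\alpha_0(a)$, and the threshold $\beta_0(a_0)$ is obtained exactly as you describe, by using $N(0,\beta)\to+\infty$ as $\beta\to 0^+$ so that $(0,a_0]$ lies in the range of $N(\cdot,\beta)$ with $\alpha_0(a)>0$.
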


We apply Proposition \ref{gminF} as follows: let $\|n[\varrho^\star]\|_{L^1}=a^\star$ and $\|n_0\|_{L^1}=a$. We have then from \fref{ineqN2},
$$
\inf_{\calA_g(a^\star)} F_\beta  \leq F_\beta(\varrho^\star)\leq \inf_{\calA(n_0,u_0,e_0)} F_\beta .$$
In Proposition \ref{gminF}, choose $a_0=\|n_0\|_{L^1}$. Since $\|n[\varrho^\star]\|_{L^1} \leq \|n_0\|_{L^1}$, then both minimization problems on $\calA_{g}(a)$ and $\calA_{g}(a^\star)$ admit a unique solution for $\beta \leq \beta_0(a_0)$.
Then, according to item (i) of Lemma \ref{comS},
$$
\inf_{\calA_{g}(a^\star)} F_\beta \leq 
\inf_{
  \scriptsize{\left\{\begin{array}{l}
                       (n,u,e) \in \calM\\
  n \in L^1_+ \textrm{with} \; \|n\|_{L^1}=\|n_0\|_{L^1}  \\
  u \in (L^1)^d, e \in L^1_+
\end{array} \right.}
  } \inf_{\calA(n,u,e)} F_\beta=\inf_{\calA_{g}(a)} F_\beta,
$$
which results in
$$
\inf_{\calA_{g}(a^\star)} F_\beta \leq \inf_{\calA_{g}(a)} F_\beta .
$$
But based on the previous proposition, we have $g(a)<g(a^\star)$, which is a contradiction if $a^\star<a$. We therefore obtain \fref{eqn}, which ends the proof of Theorem \ref{thm:Main} as an application of Proposition \ref{propegal}.

  \subsection{Proof of Proposition \ref{propegal}} \label{propproof1} When $\|n[\varrho^\star]\|_{L^1}=\|n_0\|_{L^1}$ and $\|e[\varrho^\star]\|_{L^1}=\|e_0\|_{L^1}$, we obtain from \fref{infn} and $\fref{infe}$ that 
\begin{align*}
\mathrm{Tr}\left(\mathcal{N}^{1/2}\varrho^\star\mathcal{N}^{1/2}\right) =\underset{m\to+\infty}{\lim} \; \mathrm{Tr}\left(\mathcal{N}^{1/2}\varrho_m\mathcal{N}^{1/2}\right) 
\end{align*}
and
\begin{align*}
\mathrm{Tr}\left(\mathbb{H}^{1/2}\varrho^\star\mathbb{H}^{1/2}\right) =\underset{m\to+\infty}{\lim} \; \mathrm{Tr}\left(\mathbb{H}^{1/2}\varrho_m\mathbb{H}^{1/2}\right).
\end{align*}
According to \cite[Theorem 2.21, Addendum H]{Simon-trace}, this implies, together with the corresponding weak-$*$ convergences, that $\mathcal{N}^{1/2}\varrho_m\mathcal{N}^{1/2}$ and $\mathbb{H}^{1/2}\varrho_m\mathbb{H}^{1/2}$ converge strongly in $\calJ_1$ to $\mathcal{N}^{1/2}\varrho^\star \mathcal{N}^{1/2}$ and $\mathbb{H}^{1/2}\varrho^\star \mathbb{H}^{1/2}$.

Let now $\varrho_m^{(1)}$ and $\varrho^\star_{(1)}$ be the one-particle density matrices of  $\varrho_m$ and $\varrho^\star$. We will show that $\varrho_m^{(1)}$ converges to $\varrho^\star_{(1)}$ strongly in $\calJ_1(\frakh)$. Indeed, with the definition \fref{defB},
\bee
\| \varrho_m^{(1)}- \varrho^\star_{(1)}\|_{J_1(\frakh)}&=&\sup_{\| A \|_{\calL(\frakh)} \leq 1} \Tr_\frakh \big((\varrho_m^{(1)}- \varrho^\star_{(1)}) A \big )\\
&=&\sup_{\| A \|_{\calL(\frakh)} \leq 1} \Tr \big((\varrho_m- \varrho^\star) \AA \big )\\
&=&\sup_{\| A \|_{\calL(\frakh)} \leq 1} \Tr \big(\calN^{1/2}(\varrho_m- \varrho^\star)\calN^{1/2} \BB \big )\\
&\leq& \|\calN^{1/2}\varrho_m\calN^{1/2}- \calN^{1/2}\varrho^\star\calN^{1/2}\|_{\calJ_1},
\eee
which yields the result.

We prove similarly that $h_0^{1/2}\varrho_m^{(1)}h_0^{1/2}$ converges to $h_0^{1/2}\varrho^\star_{(1)}h_0^{1/2}$ strongly in $\calJ_1(\frakh)$. An easy consequence of this and of the strong convergence of $\varrho_m^{(1)}$ in $\calJ_1(\frakh)$, is that $(h_0+\textrm{Id}_\frakh)^{1/2}\varrho_m^{(1)}(h_0+\textrm{Id}_\frakh)^{1/2}$ converges to $(h_0+\textrm{Id}_\frakh)^{1/2}\varrho^\star_{(1)}(h_0+\textrm{Id}_\frakh)^{1/2}$ strongly in $\calJ_1(\frakh)$.

We are now in position to identify $k[\varrho^\star]$ and $u[\varrho^\star]$. For $\varphi \in L^\infty(\Rm^d)$, let
$$
A=(h_0+\textrm{Id}_\frakh)^{-1/2}\nabla \cdot (\varphi \nabla)(h_0+\textrm{Id}_\frakh)^{-1/2},
$$
which is bounded in $\calL(\frakh)$. We have
from the definition of $k[\varrho^\star]$,
\begin{align*}
  \|k[\varrho_m] -&k[\varrho^\star] \|_{L^1}\\
  &=\sup_{\| \varphi \|_{L^\infty} \leq 1} \Tr_\frakh \big(\nabla \cdot (\varphi \nabla)(\varrho_m^{(1)}-\varrho^\star_{(1)}) \big )\\
&=\sup_{\| \varphi \|_{L^\infty} \leq 1} \Tr_\frakh \big(A (h_0+\textrm{Id}_\frakh)^{1/2}(\varrho_m^{(1)}-\varrho^\star_{(1)})(h_0+\textrm{Id}_\frakh)^{1/2} \big )\\
&\leq C \|(h_0+\textrm{Id})^{1/2}\varrho^{(1)}_m(h_0+\textrm{Id}_\frakh)^{1/2}- (h_0+\textrm{Id})^{1/2}\varrho^\star_{(1)}(h_0+\textrm{Id}_\frakh)^{1/2}\|_{\calJ_1(\frakh)}.
\end{align*}
This implies that $k[\varrho_m]$ converges to $k[\varrho^\star]$ strongly in $L^1(\Rm^d)$, and since $k[\varrho_m]=k_0$ by construction, we have $k[\varrho^\star]=k_0$. The proof that $u[\varrho^\star]=u_0$ is similar.

We have therefore obtained that $\varrho^\star$ satisfies the local constraints, and therefore that it belongs to the feasible set. This ends the proof.

  \subsection{Proof of Proposition \ref{gminS}} \label{propproof2}
In preparation of the proof, let, for $(\alpha,\beta) \in \Rm_+\times \Rm_+^*$, $\HH_{\alpha,\beta}= \beta \HH+\alpha \calN$. The operator $\HH_{\alpha,\beta}$ is self-adjoint on $D(\HH)$. According to \fref{Hbelow}, we have
\be \label{belowHa}
\gamma \beta d\Gamma(h_c)\leq \gamma \beta (d\Gamma(h_c)+\calN) \leq \HH_{\alpha,\beta}
\ee
in the sense of operators. Hence, $e^{- \HH_{\alpha,\beta}}$ is trace class for all $(\alpha,\beta) \in \Rm_+\times \Rm_+^*$ since $e^{-\gamma \beta d\Gamma(h_c)}$ is trace class as explained in Section \ref{secpropS}, and the partition function
$$
Z_{\alpha,\beta}=\Tr (e^{- \HH_{\alpha,\beta}})
$$
is well-defined for all $\alpha \geq 0$. Let then the Gibbs state
$$
\varrho_{\alpha,\beta}=\frac{e^{- \HH_{\alpha,\beta}}}{Z_{\alpha,\beta}}.
$$
Note that $\varrho_{\alpha,\beta}$ commutes with $\calN$. The associated average particle number and average energy are defined by, respectively,
$$
N(\alpha,\beta)=\Tr (\calN \varrho_{\alpha,\beta}), \qquad E(\alpha,\beta)=\Tr ( \HH  \varrho_{\alpha,\beta}).
$$
They are both well-defined for all $(\alpha,\beta) \in \Rm_+\times \Rm_+^*$ since in particular, according to \fref{Hbelow},
\bea \nonumber
Z_{\alpha,\beta} \Tr (\calN \varrho_{\alpha,\beta}) &=& 1+\sum_{n=1}^\infty n \Tr_{\frakF^{(n)}_{b/f}} (e^{- \beta H_n-\alpha n})
\leq 1+\sum_{n=1}^\infty n \Tr_{\frakF^{(n)}_{b/f}} (e^{- \gamma \beta H^c_n-\gamma n})\\
&\leq &1+C\sum_{n=1}^\infty \Tr_{\frakF^{(n)}_{b/f}} (e^{- \gamma \beta H^c_n})=C\Tr (e^{-\gamma \beta d\Gamma(h_c) }). \label{alz}
\eea

We will use the following lemma, helpful as well in the proof of Proposition \ref{gminF}.

\begin{lemma} \label{lemZ}
(i) $Z_{\alpha,\beta} \in C^\infty(\Rm_+^* \times \Rm^*_+)$ and
\be \label{expN}
N(\alpha,\beta)=- \partial_\alpha \log Z_{\alpha,\beta}, \qquad E(\alpha,\beta)=- \partial_\beta \log Z_{\alpha,\beta}.
\ee
  (ii) For all $\alpha \geq 0$, the function $ \Rm_+^* \ni \beta \mapsto E(\alpha,\beta)$ is continuously strictly decreasing with
  $$
  \lim_{\beta\to + \infty} E(\alpha,\beta)=0, \qquad \lim_{\beta\to 0} E(\alpha,\beta)=+\infty, \qquad \forall \alpha \geq 0.
  $$
  (iii) For all $\beta > 0$, the function $\Rm^*_+  \ni  \alpha \mapsto N(\alpha,\beta)$ is continuously strictly decreasing  with
  $$
  \lim_{\alpha\to + \infty} N(\alpha,\beta)=0, \qquad \forall \beta > 0, \qquad \lim_{\beta\to  0} N(0,\beta)=+\infty.
  $$
\end{lemma}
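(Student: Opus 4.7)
The approach rests on the fact that $\HH$ preserves each particle-number sector, so $\HH$ and $\calN$ are simultaneously diagonalizable; this reduces $Z_{\alpha,\beta}$ to the convergent series
\begin{equation*}
Z_{\alpha,\beta}=\sum_{n=0}^{\infty} e^{-\alpha n}\, z_n(\beta),\qquad z_n(\beta):=\Tr_{\frakF^{(n)}_{b/f}}\!\bigl(e^{-\beta H_n}\bigr),
\end{equation*}
valid on $\Rm_+\times\Rm_+^*$ thanks to \fref{Hbelow} and the trace-class property of $e^{-\tau d\Gamma(h_c)}$ already recalled in Section \ref{secpropS}. To prove (i), the plan is to differentiate this series termwise in $(\alpha,\beta)$: the uniform bounds on series of the form $\sum_n n^j e^{-\alpha n}\Tr_{\frakF^{(n)}_{b/f}}\!\bigl(H_n^k e^{-\beta H_n}\bigr)$ needed to apply the Weierstrass M-test on compact subsets of $\Rm_+^*\times\Rm_+^*$ follow from \fref{Hbelow} (which lets one dominate $H_n^k e^{-\beta H_n/2}$ by a power of $H_n^c+n$ times an exponential of $\gamma\beta d\Gamma(h_c)/2$) combined with the same trace bound. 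The identities \fref{expN} then come from direct differentiation.

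For (ii), strict monotonicity of $\beta\mapsto E(\alpha,\beta)$ reduces to the variance identity
\begin{equation*}
-\partial_\beta E(\alpha,\beta)=\partial^2_\beta\log Z_{\alpha,\beta}=\Tr\!\bigl(\HH^2\varrho_{\alpha,\beta}\bigr)-\bigl(\Tr(\HH\varrho_{\alpha,\beta})\bigr)^{2},
\end{equation*}
which is strictly positive since $\HH$ has unbounded spectrum while $\varrho_{\alpha,\beta}$ has full support. The limit $E(\alpha,\beta)\to 0$ as $\beta\to+\infty$ follows from $H_n\geq\gamma n$ (by \fref{Hbelow}), so that every sector $n\geq 1$ is exponentially suppressed; a dominated-convergence argument on the series then yields both $Z_{\alpha,\beta}\to 1$ and $E(\alpha,\beta)\to 0$. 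For $\beta\to 0$, each $z_n(\beta)$ with $n\geq 1$ diverges (since $\frakF^{(n)}_{b/f}$ is infinite-dimensional and each eigenvalue contributes $e^{-\beta\lambda_k^{(n)}}\to 1$), so $\log Z_{\alpha,\beta}\to+\infty$. Combined with convexity of $\beta\mapsto\log Z_{\alpha,\beta}$, this forces $-E(\alpha,\beta)=\partial_\beta\log Z_{\alpha,\beta}\to-\infty$: indeed, if $\partial_\beta\log Z_{\alpha,\beta}\geq -M$ for all $\beta\in(0,\beta_0)$, integration over $(\beta,\beta_0)$ would give $\log Z_{\alpha,\beta}\leq \log Z_{\alpha,\beta_0}+M\beta_0$, contradicting the blow-up.

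Part (iii) follows the same scheme: strict decrease of $\alpha\mapsto N(\alpha,\beta)$ comes from $\partial_\alpha N=-\mathrm{Var}_{\varrho_{\alpha,\beta}}(\calN)<0$ (again strict by full support and unboundedness of $\calN$), and the limit $N(\alpha,\beta)\to 0$ as $\alpha\to+\infty$ follows from exponential suppression by $e^{-\alpha n}$ of every $n\geq 1$ sector in $\sum_n n\,e^{-\alpha n} z_n(\beta)/Z_{\alpha,\beta}$. The main obstacle is the remaining limit $N(0,\beta)\to+\infty$ as $\beta\to 0$: in contrast to the analogous step in (ii), the convexity-in-$\alpha$ argument is inconclusive, because $\log Z_{\alpha,\beta}\to 0$ as $\alpha\to+\infty$ only forces integrability of $N(\cdot,\beta)$ on $[0,\infty)$, not divergence of its value at $0$. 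To handle this I would go back to the sectorial representation $N(0,\beta)=\sum_n n\, z_n(\beta)/Z_{0,\beta}$ and construct explicit orthogonal families of trial vectors in high-$n$ sectors (Slater determinants of low-lying eigenfunctions of $h_c$ in the fermionic case, symmetric tensor products of localized wave packets in the bosonic case), controlling the two-body interaction contribution using the classical stability assumption \fref{class} and the form-boundedness of $w$ and $v_-$. Weyl-type asymptotics of $\Tr_\frakh(e^{-\beta h_c})\to+\infty$ as $\beta\to 0$ should then make $z_n(\beta)/z_0(\beta)$ grow fast enough with $n$ to force the sector-weight to concentrate on $n\to\infty$, yielding the divergence of $N(0,\beta)$.
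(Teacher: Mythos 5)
Most of what you write tracks the paper's proof. The paper diagonalizes $\HH_{\alpha,\beta}$ in a joint eigenbasis of $\HH$ and $\calN$ (available because $\HH$ has compact resolvent and commutes with $\calN$) and works with $Z_{\alpha,\beta}=\sum_\ell e^{-\beta\lambda_\ell-\alpha n_\ell}$; your sector decomposition $\sum_n e^{-\alpha n}z_n(\beta)$ is the same sum grouped by particle number. Termwise differentiation with domination for (i), the variance/Cauchy--Schwarz identity for strict monotonicity (with strictness coming from the unboundedness of $\HH$ and $\calN$), and dominated convergence for the limits $\beta\to+\infty$ and $\alpha\to+\infty$ all coincide with the paper. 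Your argument for $\lim_{\beta\to 0}E(\alpha,\beta)=+\infty$ via convexity of $\beta\mapsto\log Z_{\alpha,\beta}$ plus blow-up of $\log Z_{\alpha,\beta}$ is a genuine and clean alternative to the paper's more hands-on route (which splits the spectral sum at a level $\ell_M$ with $\lambda_{\ell_M}\geq M$, bounds the finite head by $\ell_M$, and lets the divergent tail dominate); combined with the already-established monotonicity of $E$ in $\beta$ it does give the full limit.

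The gap is the last assertion, $N(0,\beta)\to+\infty$ as $\beta\to 0$, which you explicitly leave as an unexecuted program. As sketched it does not close: a Peierls-type lower bound on $z_n(\beta)$ requires an \emph{upper} bound on the energy of your trial Slater determinants or symmetric products, and the stability assumption (\ref{class}) only bounds the interaction from \emph{below}, so it is the wrong tool there; you would instead need the form-boundedness of $w$ and the local integrability of $v_+$, and then a quantitative comparison between the number of low-energy $n$-particle trial states and the upper bound $\sum_{n\leq M}z_n(\beta)\leq C_M\big(\Tr_{\frakh}(e^{-\gamma\beta h_c})\big)^M$ that stability provides for the low sectors. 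Whether that counting wins depends on the growth of the spectral counting function of $h_c$, and nothing in your sketch establishes it. For comparison, the paper dispatches this limit with the same head/tail splitting as in (ii), applied to the multiplicities $\{n_\ell\}$ after ``rearranging them into a nondecreasing sequence tending to infinity''; your instinct that this is the delicate point of the lemma is sound (each value of $\calN$ occurs with infinite multiplicity in the joint spectrum, so such a rearrangement of the full sequence requires more care than the paper lets on), but flagging the difficulty is not the same as resolving it: as submitted, your proof of the lemma is incomplete at exactly this step.
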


\begin{proof} The proof is simplest by using the spectral decomposition of $\HH_{\alpha,\beta}$ instead of each of the $H_n$, $n \geq 2$. We have already observed in the proof of Lemma \ref{lem:comp} that $\HH$ has a compact resolvent, and therefore so does $\HH_{\alpha,\beta}$. Let $\{\psi_\ell\}_{\ell \in \NN}$ be a basis of $\frakF_{b/f}$ of eigenvectors of $\HH$. Then \fref{belowHa} for $\alpha=0$ shows that $\psi_\ell \in D(\calN)$. Then, since $\calN$ and $\HH$ commute, we can pick the basis $\{\psi_\ell\}_{\ell \in \NN}$ such that
  $$
  \calN \psi_\ell=n_\ell \psi_\ell, \qquad \ell \in \NN,
  $$
for some $n_\ell \geq 0$.  The eigenvalues of $\HH_{\alpha,\beta}$ are as a consequence of the form $\beta \lambda_\ell+ \alpha n_\ell$, for $\{\lambda_\ell\}_{\ell \in \NN}$ the eigenvalues of $\HH$. The $\lambda_\ell$'s are arranged into a nondecreasing sequence that tends to the infinity. The zero eigenvalue is simple and associated with the vacuum eigenvector $1 \oplus_{n=1}^\infty 0$, and we set $\lambda_0=n_0=0$. We have as well   $n_\ell \geq 1$ and $\lambda_\ell \geq \gamma$ for $\ell \geq 1$ according to \fref{Hbelow}. Of course, the  $\lambda_\ell$ and $n_\ell$ are different for fermions and bosons.
  
     The partition function $Z_{\alpha,\beta}$ then reads
    $$
Z_{\alpha,\beta}=\sum_{\ell \in \NN} e^{-\beta \lambda_\ell -\alpha n_\ell}>1.
$$
It is then direct to show, using dominated convergence for series, that $Z_{\alpha,\beta}$ is continuously infinitely differentiable for $(\alpha,\beta)\in \Rm^*_+ \times \Rm_+^*$. The expressions in \fref{expN} follow then easily, and as a consequence $N$ and $E$ are also continuously infinitely differentiable for $(\alpha,\beta)\in \Rm^*_+ \times \Rm_+^*$. In particular,

    $$\ds
    \partial_\alpha N(\alpha,\beta)=-\frac{\left(\sum_{\ell \in \NN} n^2_\ell e^{-\beta \lambda_\ell -\alpha n_\ell}\right) \left(\sum_{\ell \in \NN} e^{-\beta \lambda_\ell -\alpha n_\ell}\right)-\left(\sum_{\ell \in \NN} n_\ell e^{-\beta \lambda_\ell -\alpha n_\ell}\right)^2}{Z_{\alpha,\beta}^2}.
    $$
    The Cauchy-Schwarz inequality then shows that $\partial_\alpha N(\alpha,\beta)<0$ since the equality case is not possible as there exist some indices $\ell$ and $\ell'$ for which $n_\ell \neq n_{\ell'}$ (this is easily seen by remarking for instance that if $n_\ell= \bar n <\infty$ for all $\ell \geq 1$, then $\|\calN \psi\|=\bar n \|\psi\|$ for all $\psi \in D(\calN)$, which is absurd since $\calN$ is unbounded). A similar calculation shows that $\partial_\beta E(\alpha,\beta)<0$ for $(\alpha,\beta)\in \Rm^*_+ \times \Rm_+^*$.

    The limits as $\alpha \to +\infty$ for $N$ and as $\beta \to +\infty$ for $E$ follow easily by dominated convergence. For the limit as $\beta \to 0$ of $E$, set $\ell_M$ such that $\lambda_{\ell_M} \geq M$ for some $M>0$ fixed. Since $\sum_{\ell \geq \ell_M} e^{-\beta \lambda_\ell -\alpha n_\ell}$ converges to $+\infty$ as $\beta \to 0$ (for otherwise the operator $e^{-\alpha \calN}$ would be trace class), and since $\sum_{\ell < \ell_M} e^{-\beta \lambda_\ell -\alpha n_\ell}\leq \sum_{\ell < \ell_M}=\ell_M$, there exists $\beta_0(M)$ such that
    $$
     \sum_{\ell < \ell_M} e^{-\beta \lambda_\ell -\alpha n_\ell} \leq \sum_{\ell \geq \ell_M} e^{-\beta \lambda_\ell -\alpha n_\ell}, \qquad \forall \beta\leq \beta_0(M).
    $$
    Hence,

    $$
  \frac{M}{2} \leq \frac{1}{2} \frac{M\sum_{\ell \geq \ell_M} e^{-\beta \lambda_\ell -\alpha n_\ell} }{\sum_{\ell \geq \ell_M} e^{-\beta \lambda_\ell -\alpha n_\ell}} \leq E(\alpha,\beta),
    $$
    which proves the second limit in (ii).

    It remains to treat the second limit in (iii). Since the operator $\calN$ is not bounded on $\frakF_{b/f}$, we can rearrange the sequence $\{n_\ell\}_{\ell \in \NN}$ into a nondecreasing sequence $\{ n_{\ell_j}\}_{j \in \NN}$ such that $n_{\ell_j} \to \infty$ as $j \to \infty$. Proceeding as in the proof of the second limit in (ii), we then find that for all $M>0$, there exist $j_M$ and $\beta_0(M)$ such that
$$
\frac{M}{2} \leq \frac{1}{2} \frac{M \sum_{ j \geq j_M} e^{-\beta\lambda_{\ell_j}}}{\sum_{ j \geq j_M} e^{-\beta \lambda_{\ell_j}}} \leq \frac{\sum_{ j \in \NN} n_{\ell_j}e^{-\beta\lambda_{\ell_j}}}{\sum_{ j \in \NN} e^{-\beta\lambda_{\ell_j}}}=N(0,\beta), \qquad \forall \beta \leq \beta_0(M).
$$
This proves the second limit in (iii) and ends the proof of the lemma.
\end{proof}
\bigskip

We proceed now to the proof of Proposition \ref{gminS}. First of all, according to Lemma \ref{lemZ} (ii), there exists, for all $a>0$, a $\beta_0(a) \in (0,\infty)$ such that
  \be \label{egE}
  E(0,\beta_0(a))=a.
  \ee
  Since $E(0,\beta)$ is continuously differentiable and strictly monotone, the global version of the implicit function theorem implies that $a \mapsto \beta_0(a)$ is continuously differentiable. 
  Let now $\varrho \in \calE$ and consider the free energy
  $$
  F_{\beta_0(a)}(\varrho)=\beta^{-1}_0(a) S(\varrho)+ \Tr (\HH^{1/2} \varrho \HH^{1/2} ).
  $$
For $\calF$ the relative entropy introduced in Section \ref{secpropS}, we find
  $$
  \beta_0(a) F_{\beta_0(a)}(\varrho)=\calF(\varrho,\varrho_{0,\beta_0(a)})-\log Z_{0,\beta_0(a)},
  $$
  and as a consequence, for  any $\varrho \in \calA_{g,e}(a)$,
  $$
  S(\varrho)=\calF(\varrho,\varrho_{0,\beta_0(a)})-\log Z_{0,\beta_0(a)}-a \beta_0(a).
  $$
  Since $\calF(\varrho,\varrho_{0,\beta_0(a)})=0$ if and only if $\varrho=\varrho_{0,\beta_0(a)}$, we obtain that $\varrho_{0,\beta_0(a)}$ is the unique minimizer of $S$ in $\calA_{g,e}(a)$. This proves the existence part of Proposition \ref{gminS}.

  Regarding the monotonicity of the entropy, we have
  $$
  f(a)=S(\varrho_{0,\beta_0(a)})=-\log Z_{0,\beta_0(a)}-a \beta_0(a),$$
  which is continuously differentiable w.r.t. $a$ since $Z_{0,\beta}$ and $\beta_0(a)$ are both $C^1$ w.r.t $\beta$ and $a$, respectively. Then, thanks to Lemma \ref{lemZ} (i) and \fref{egE},
  $$
  f'(a)=\beta_0'(a) E(0,\beta_0(a))-\beta'_0(a) a -\beta_0(a)=-\beta_0(a)<0.
  $$
  This ends the proof.

\subsection{Proof of Proposition \ref{gminF}} \label{propproof3}

  The proof is very similar to that of Proposition \ref{gminS}.
  First of all, according to Lemma \ref{lemZ} (iii), we have
  $$
  N(\alpha,\beta) \leq N(0,\beta), \qquad \forall (\alpha,\beta) \in \Rm_+ \times \Rm_+^*.
  $$
Note that $N(0,\beta)$ is well-defined for $\beta>0$ as proved in \fref{alz}, and that $N(\alpha,\beta)$ is continuous at $\alpha=0$ for all $\beta>0$ as a direct application of monotone convergence. Pick $a_0>0$. We will set $\beta$ sufficiently small so that all $a>0$ less than $a_0$ are in the range of $N(\alpha,\beta)$ for all $\alpha$ sufficiently large.   

 Since $N(0,\beta) \to +\infty$ as $\beta \to 0$ according to Lemma \ref{lemZ} (iii), there exists $\beta_0(a_0)$ such that
  $$
  N(0,\beta) \geq a_0, \qquad \forall \beta \leq \beta_0(a_0).
  $$
We fix from now on a $\beta>0$ such that $\beta \leq \beta_0(a_0)$. Then, since $\alpha \mapsto N(\alpha,\beta)$ is strictly decreasing, there exists $\alpha_1>0$ such that
  $$
  N(\alpha,\beta) \leq a_0, \qquad \forall \alpha \geq \alpha_1,
  $$
 and for all $a \in (0,a_0]$, there exists $\alpha_0(a) \in [\alpha_1,\infty)$ such that
  \be \label{egN}
  N(\alpha_0(a),\beta)=a. \qquad 
  \ee
  Since $N(\alpha,\beta)$ is continuously differentiable and strictly monotone in $\alpha$, the global version of the implicit function theorem implies that $\alpha \mapsto \alpha_0(a)$ is continuously differentiable. As in the proof of Proposition \ref{gminS}, we find that, for  any $\varrho \in \calA_{g}(a)$,
  \bee
  \beta F_\beta(\varrho)&=&\calF(\varrho,\varrho_{\alpha_0(a),\beta})-\log Z_{\alpha_0(a),\beta}-\alpha_0(a) \Tr (\calN^{1/2} \varrho \calN^{1/2})\\
&=& \calF(\varrho,\varrho_{\alpha_0(a),\beta})-\log Z_{\alpha_0(a),\beta}-\alpha_0(a) a,
  \eee
  which shows that $\varrho_{\alpha_0(a),\beta}$ is the unique minimizer of $F_\beta$ in $\calA_{g}(a)$.

  Regarding the monotonicity of the entropy, we have 
  $$
  \beta g(a)=\beta F_\beta(\varrho_{\alpha_0(a),\beta})=-\log Z_{\alpha_0(a),\beta}-a \alpha_0(a),$$
  which is continuously differentiable w.r.t. $a$ since $Z_{\alpha,\beta}$ and $\alpha_0(a)$ are both $C^1$ w.r.t $\alpha$ and $a$, respectively. Then, thanks to Lemma \ref{lemZ} (i) and \fref{egN},
   $$
  \beta g'(a)=\alpha_0'(a) N(\alpha_0(a),\beta)-a \alpha'_0(a)-\alpha_0(a)=-\alpha_0(a)<0.
  $$
  This ends the proof.
  

\section{Appendix}
\subsection{Justification of Definition \ref{onebody}}

The fact that $\varrho^{(1)}$ is well-defined is established as follows. Set 
$$
\BB=  0 \oplus \bigoplus_{n = 1}^{+\infty} \mathbb{B}_{(n)}:=0 \oplus \bigoplus_{n = 1}^{+\infty} n^{-1}\mathbb{A}_{(n)},
$$
where $\AA$ is the second quantization of $A$ and $\AA_{(n)}$ its component on the $n$-th sector.
Then, we have the estimate
\be \label{boundB}
\| \BB \|_{\calL(\mathfrak{F}_{b/f})} \leq \|A\|_{\calL(\frakh)}.
\ee
Indeed, for $\psi =\{\psi^{(n)}\}_{n\in \NN}\in \mathfrak{F}_{b/f}$ and $n \geq 1$, we have
$$
\BB_{(n)} \psi^{(n)}=(\BB \psi)_{(n)}=\frac{1}{n} \left(\sum_{i=1}^n (A)_{i} \psi^{(n)} \right),
$$
for $(A)_{i}$ the operator $A$ acting on the variable $x_i$. This yields directly
$$
\| \BB_{(n)} \psi^{(n)}\|_{n} \leq \|A\|_{\calL(\frakh)} \| \psi^{(n)}\|_{n}.
$$
Since $\| \BB \|_{\calL(\mathfrak{F}_{b/f})}=\sup_{n \in \NN} \|\BB_{(n)}\|_{n}$, this gives \fref{boundB}. 

For $\varrho \in \calS_0$, consider now the linear  map
$$
F: A \in \calL(\frakh) \mapsto F(A)=\Tr( \AA \varrho).
$$
It is well-defined since, using the above notation for $\BB$,
$$
\left|\Tr( \AA \varrho) \right|=\left|\mathrm{Tr} \left (\BB \calN^{1/2} \varrho\, \calN^{1/2}\right)\right| \leq C \|A\|_{\calL(\frakh)},
$$
where we used the fact that 
$$
\mathrm{Tr} \left (\calN^{1/2} \varrho\, \calN^{1/2}\right)=C<\infty,
$$
since $\varrho \in \calS_0$. When $A$ is compact, $F$ is therefore a linear continuous map on the space of compact operators on $\frakh$. By duality, we can then conclude that there exists a unique $\varrho^{(1)} \in \calJ_1(\frakh)$ such that
$$
F(A)=\mathrm{Tr}_{\frakh}\left(A \varrho^{(1)}\right),
$$
for all compact operators $A$ on $\calL(\frakh)$. The case $A$ bounded follows finally by approximation.

The fact that $\varrho^{(1)}$ is nonnegative is established as follows. Let $\varphi \in \frakh$ with $\|\varphi\|_{\frakh}=1$, and consider the rank one projector $P=\ket{\varphi}\bra{\varphi}$. Then
$$
(\varphi,\varrho^{(1)} \varphi)_\frakh=\mathrm{Tr}_{\frakh}\left(P \varrho^{(1)}\right) =\mathrm{Tr} \left( d\Gamma(P) \varrho\right).
$$
Denoting by $\{\rho_p\}_{p \in \NN}$ and $\{\psi_p\}_{p \in \NN}$ the (nonnegative) eigenvalues and eigenfunctions of $\varrho \in \calE_0$, the last term is equal to 
  $$
  \sum_{p \in \NN} \sum_{n \in \NN^*} \sum_{j=1}^n \rho_p \left(\psi_p^{(n)}, P_{(j)}\psi_p^{(n)}\right)_{n}= \sum_{p \in \NN} \sum_{n \in \NN^*} \sum_{j=1}^n \rho_p \left(P_{(j)} \psi_p^{(n)}, P_{(j)}\psi_p^{(n)}\right)_{n} \geq 0.
  $$
  Above $P_{(j)}$ is the operator $P$ acting on $x_j$, and we used the fact that $P^2_{(j)}=P_{(j)}$. This yields the positivity and ends the justification of Definition \ref{onebody}.

\subsection{Proof of Lemma \ref{enrho}} 
  Denote by $\{\rho_p\}_{p \in \NN}$ and $\{\psi_p\}_{p \in \NN}$ the eigenvalues and eigenfunctions of $\varrho \in \calE_0$. A direct calculation shows first that
  $$
  \Tr\left( \HH_0^{1/2} \varrho \,\HH_0^{1/2} \right)=\sum_{p \in \NN} \sum_{n \in \NN^*} \rho_p \left\| d\Gamma(h_0)^{1/2}_{(n)} \psi_p^{(n)}\right\|^2_{n},
  $$
  where $\psi_p^{(n)}$ is the component of $\psi_p$ on the $n$-th sector and
  $$
  d\Gamma(h_0)^{1/2}_{(n)} = \left(\sum_{j=1}^n -\Delta_{x_j}\right)^{1/2}.
  $$

   Since $h_0$ is not bounded, we proceed by regularization in order to use \fref{def1} for the definition of the one-body density matrix. For $\eps>0$,  set then $h_\eps=h_0(\mathrm{Id}_{\mathfrak{h}}+\eps h_0)^{-1} \in \calL(\frakh)$. According to \fref{def1}, we have
  \be \label{rel}
  \mathrm{Tr}_\frakh \left( h_\eps^{1/2} \varrho^{(1)} h_\eps^{1/2} \right)=\mathrm{Tr}_\frakh \left( h_\eps \varrho^{(1)}  \right)=
  \Tr\left( d\Gamma(h_\eps) \varrho \right).
  \ee
  The last term is equal to
  \be \label{rel1}
  \sum_{p \in \NN} \sum_{n \in \NN^*} \rho_p \left\| \AA^\eps_{(n)} \psi_p^{(n)}\right\|^2_{n},
  \ee
  with
  $$
\AA^\eps_{(n)}=\left(\sum_{j=1}^n -\Delta_{x_j} (\mathrm{Id}_{\mathfrak{h}}- \eps \Delta_{x_j})^{-1}\right)^{1/2}.
  $$
  For $\psi \in \mathfrak{F}_{b/f}^{(n)}$, we find by a Fourier transform
  \bea  \label{rel2}
  \| \AA^\eps_{(n)} \psi \|^2_{(n)} =&(2 \pi)^{-n} \int_{(\Rm^d)^n} \left(\sum_{j=1}^n \frac{|k_j|^2}{1+\eps |k_j|^2}\right)^{1/2} |\widehat \psi(k_1, \cdots,k_d)|^2 d k_1 \cdots d k_d\\\nonumber
  \leq&  \| d\Gamma(h_0)^{1/2}_{(n)} \psi \|^2_{(n)} .
  \eea
Hence, 
$$
\mathrm{Tr}_\frakh \left( h_\eps^{1/2} \varrho^{(1)} h_\eps^{1/2} \right) \leq \Tr\left( \HH_0^{1/2} \varrho \,\HH_0^{1/2} \right),
$$  
and there exists therefore $\alpha \in \calJ_1(\frakh)$ and a subsequence such that $h_{\eps_\ell}^{1/2} \varrho^{(1)} h_{\eps_\ell}^{1/2} \to \alpha$ in  $\calJ_1(\frakh)$ weak-$*$ as $\ell \to \infty $ with
   $$ \mathrm{Tr}_\frakh \left( \alpha \right) \leq \Tr\left( \HH_0^{1/2} \varrho \,\HH_0^{1/2} \right).$$
We now identify $\alpha$ with $h_0^{1/2} \varrho^{(1)} h_0^{1/2}$. Let $K$ be a compact operator on $\frakh$. Then,
$$
   \mathrm{Tr}_\frakh \left( (\mathrm{Id}_{\mathfrak{h}}+h_0)^{-1} K (\mathrm{Id}_{\mathfrak{h}}+h_0)^{-1} h_\eps^{1/2} \varrho^{(1)} h_\eps^{1/2} \right)= \mathrm{Tr}_\frakh (K_\eps \varrho^{(1)}),
   $$
   with
   $$
   K_\eps=h_\eps^{1/2} (\mathrm{Id}_{\mathfrak{h}}+h_0)^{-1} K (\mathrm{Id}_{\mathfrak{h}}+h_0)^{-1} h_\eps^{1/2}.
   $$
   As $\eps \to 0$, the operator $K_\eps$ converges strongly to $h_0^{1/2} (\mathrm{Id}_{\mathfrak{h}}+h_0)^{-1} K (\mathrm{Id}_{\mathfrak{h}}+h_0)^{-1} h_0^{1/2}$ in $\calL(\frakh)$. As a consequence
   \begin{align*}
   \mathrm{Tr}_\frakh \big( (\mathrm{Id}_{\mathfrak{h}}+h_0)^{-1} K& (\mathrm{Id}_{\mathfrak{h}}+h_0)^{-1} \alpha \big)\\
&= \mathrm{Tr}_\frakh \left( K h_0^{1/2} (\mathrm{Id}_{\mathfrak{h}}+h_0)^{-1} \varrho^{(1)} (\mathrm{Id}_{\mathfrak{h}}+h_0)^{-1} h_0^{1/2} \right),
   \end{align*}
   which allows us to identify $\alpha$  with $h_0^{1/2} \varrho^{(1)} h_0^{1/2}$. The relation \fref{eqkin} is obtained by passing to the limit in \fref{rel}: in the l.h.s, we use the fact that $h_0^{1/2} \varrho^{(1)} h_0^{1/2} \in \calJ_1(\frakh)$, and in the r.h.s., we use \fref{rel1}, \fref{rel2}, and monotone convergence. This ends the proof.

\bibliographystyle{plain}
\bibliography{bibliography.bib}

\end{document}